\documentclass[12pt,twoside,reqno]{amsart}
\usepackage[colorlinks=true,citecolor=blue]{hyperref}
\usepackage{mathptmx, amsmath, amssymb, amsfonts, amsthm, mathptmx, enumerate, color}
\usepackage{graphicx}
\usepackage{epstopdf}

\newtheorem{theorem}{Theorem}[section]

\newtheorem{lemma}{Lemma}[section]
\newtheorem{proposition}{Proposition}[section]
\theoremstyle{definition}
\newtheorem{definition}{Definition}[section]

\newtheorem{remark}{Remark}[section]

\numberwithin{equation}{section}

\usepackage{cite}
\newcommand{\Real}{\mathbb R}
\newcommand{\real}[1]{{\mathbb R}^{#1}}
\newcommand{\set}[1]{\left\{#1\right\}}

\newcommand{\U}{\mathbb{U}}

\begin{document}
\setcounter{page}{1}

\vspace*{1.0cm}
\title[The Natural Physics of Optimization]
{On The Mathematics of the Natural Physics of Optimization$^{*}$}
\author[I. M. Ross]{ I. M. Ross$^{\dag}$}
\maketitle
\vspace*{-0.6cm}

\begin{center}
{\footnotesize {\it

Department of Mechanical and Aerospace Engineering, Naval Postgraduate School, Monterey, CA\\

}}\end{center}

\vskip 4mm {\small\noindent {\bf Abstract.}
A number of optimization algorithms have been inspired by the physics of Newtonian motion. Here, we ask the question: do algorithms themselves obey some ``natural laws of motion,'' and can they be derived by an application of these laws?  We explore this question by positing the theory that optimization algorithms may be considered as some manifestation of hidden algorithm primitives that obey certain universal non-Newtonian dynamics.  This natural physics of optimization is developed by equating the terminal transversality conditions of an optimal control problem to the generalized Karush/John-Kuhn-Tucker conditions of an optimization problem. Through this equivalence formulation, the data functions of a given constrained optimization problem generate a natural vector field that permeates an entire hidden space with information on the optimality conditions. An ``action-at-a-distance'' operation via a Pontryagin-type minimum principle produces a local action to deliver a globalized result by way of a Hamilton-Jacobi inequality. An inverse-optimal algorithm is generated by performing control jumps that dissipate quantized ``energy'' defined by a search Lyapunov function. Illustrative applications of the proposed theory show that a large number of algorithms can be generated and explained in terms of the new mathematical physics of optimization.

\noindent {\bf Keywords.}
Hamilton-Jacobi inequality; inverse optimality; Karush/John-Kuhn-Tucker conditions; partial guidability; proximal aiming;  search Lyapunov function;  transversality conditions. }

\renewcommand{\thefootnote}{}
\footnotetext{\textit{$^*$J. Nonlinear Var. Anal. 10 (2026), 661-686. https://doi.org/10.23952/jnva.10.2026.3; special issue dedicated to Yurii Nesterov on the occasion of his 70th birthday.}
\par
$^\dag$Distinguished Professor
}

\section{Introduction}\label{sec:intro}

For the limited purposes of introducing a key idea, we use the notation $x_f \in \real{n}, \ n \in \mathbb{N}_+$ to be a continuous variable in the constrained optimization problem given abstractly by,
\begin{eqnarray}
&(O_A) \left\{
\begin{array}{lrl}
\text{minimize } && g_0(x_f)  \\
\text{subject to}
&& x_f \in  C
\end{array} \right.& \label{eq:OP-A}
\end{eqnarray}
where $C \subseteq \real{n}$ is a continuous set and $g_0: x_f \mapsto \Real$ is a Lipschitz-continuous function. The first-order necessary condition for Problem~$(O_A)$ is given by the (generalized) Karush/John-Kuhn-Tucker condition\cite{clarke-green-book,clarke-2013book,vinter,boris-book-2006}:
\begin{equation}\label{eq:JKKT-abstract}
0 \in \nu_0\,\partial g_0(x_f) + N_C(x_f)
\end{equation}
where, $\nu_0 \in \Real_+$ is a Fritz John cost multiplier\cite{bazaraa-2006,luenberger-2008,NW:NumOptBook}, $\partial g_0(x_f)$ is the (limiting/Mordukhovich) subdifferential\cite{vinter,boris-book-2006, clarke-2013book} of $g_0$ at $x_f$ and $N_C(x_f)$ is the (limiting/Mordukhovich) normal cone\cite{vinter,boris-book-2006,clarke-2013book} to $C$ at $x_f$. Next, consider an optimal control problem given by,
\begin{eqnarray}
&(\widetilde{O_B}) \left\{
\begin{array}{lrl}
\text{minimize } && g_0(x(t_f))  \\
\text{subject to} && \dot x(t)= \widetilde{f}(x(t), \widetilde{u}(t))\\
&& x(t_f) \in  C \\
&&x(t_0) = x^0 \\
\end{array} \right.& \label{eq:prob-Btilde}
\end{eqnarray}
where, $t \in \Real$ is an independent ``time'' variable, $x(t) \in \real{n}$ is a state variable,  $\widetilde{u}(t) \in \real{l}$ is a control variable, $\widetilde{f}: \real{n} \times \real{l} \to \real{n}$ is a controllable vector field that satisfies the standard hypothesis\cite{clarke-2013book}, and $x^0 \in \real{n}$ is a given initial point at time $ t = t_0 \le t_f$.  The unknowns in Problem~$(\widetilde{O_B})$ are the state-control function pair, $ t \mapsto (x(t), \widetilde{u}(t)) $ and $t_f < \infty$. The terminal transversality condition for Problem~$(\widetilde{O_B})$ is given by\cite{vinter,clarke-2013book},
\begin{equation}\label{eq:tvc-cone}
\lambda(t_f) \in \widetilde{\nu}_0\,\partial g_0(x(t_f)) + N_C(x(t_f))
\end{equation}
where, $\lambda(t_f) \in \real{n}$ is the final-time value of the adjoint covector and $\widetilde{\nu}_0 \in \Real_+$ is the cost multiplier associated with \eqref{eq:prob-Btilde}. Now suppose there exists a Problem~$(O_B)$ whose extremal solution is such that
\begin{equation}\label{eq:keyDef4ProbB}
\lambda(t_f) = 0
\end{equation}
Then it is obvious that the following statements are true:
\begin{enumerate}
\item The final-time value $x(t_f)$ of an extremal state trajectory to Problem~$(O_B)$ is a candidate solution to Problem~$(O_A)$;
\item An extremal state trajectory $t \mapsto x(t)$ to Problem~$(O_B)$ is a continuous-time convergent ``algorithm'' for Problem~$(O_A)$ with $x^0$ as a starting point (i.e., $x^0$ is a guess of a solution to Problem~$(O_A)$); and,
\item An extremal control trajectory  $[t_0, t_f] \ni t \mapsto \widetilde{u}(t)$ to Problem~$(O_B)$ generates a continuous-time convergent algorithm for Problem~$(O_A)$ via a solution to the initial value problem (IVP) given by,
    \begin{equation}\label{eq:IVPalg4probA}
    \dot x= \widetilde{f}(x, \widetilde{u}(t)), \quad x(t_0) = x^0
    \end{equation}
\end{enumerate}
%
%--------------------
\begin{definition}[\emph{Hidden Algorithm Primitive for Problem~$(O_A)$}]\label{def:algor-primitive}
Given a point $x^0 \in \real{n}$, let $[t_0, t_f] \ni t \mapsto x(t)$ be a solution to Problem~$(O_B)$.  Then the point-to-set map,
\begin{equation}\label{eq:algorithm-primitive-def}
x^0 \mapsto \set{x^0 = x(t_0), [t_0, t_f] \ni t \mapsto x(t)}
\end{equation}
is called a hidden algorithm primitive for Problem~$(O_A)$.
\end{definition}
%---------------------
Definition~\ref{def:algor-primitive} assumes the existence of Problem~$(O_B)$. The existence of this problem is given by the Transversality Mapping Principle\cite{ross:CD,rossJCAM-1,rossJCAM-2}.
%
%==============
\begin{theorem}[Transversality Mapping Principle\cite{ross:CD,rossJCAM-1,rossJCAM-2}]
There exists a Problem~$(O_B)$ whose terminal transversality condition is given by \eqref{eq:keyDef4ProbB}.
\end{theorem}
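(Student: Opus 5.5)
The plan is to prove existence by exhibiting one instance of Problem~$(\widetilde{O_B})$ whose extremals necessarily have $\lambda(t_f)=0$. I would keep the cost $g_0(x(t_f))$ and the endpoint constraint $x(t_f)\in C$ exactly as in \eqref{eq:prob-Btilde}, so that the transversality condition \eqref{eq:tvc-cone} still carries the data $\widetilde{\nu}_0\,\partial g_0 + N_C$. The only freedom I would use is the choice of the vector field $\widetilde f$. The aim is to choose it so that the adjoint equation and the minimum principle together force the left-hand side of \eqref{eq:tvc-cone} to vanish.

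\emph{Step 1: choose the dynamics.} I would take $l=n$, an unconstrained control $\widetilde u(t)\in\real{n}$, and the simplest controllable field, $\widetilde f(x,u)=u$. This gives the problem $\dot x = u$, $x(t_0)=x^0$, $x(t_f)\in C$, with $t_f$ free, minimizing $g_0(x(t_f))$. Since $\widetilde f$ is smooth and does not depend on $x$, the standard hypotheses of the nonsmooth maximum principle \cite{clarke-2013book,vinter} hold, and that principle applies with the limiting subdifferential and normal cone.

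\emph{Step 2: write the necessary conditions.} The Pontryagin Hamiltonian is $H(\lambda,x,u)=\lambda\cdot u$. The adjoint equation is $\dot\lambda = -\partial_x H = 0$, so $\lambda(t)\equiv\lambda(t_f)$ is constant. The Hamiltonian minimization condition requires $u(t)$ to minimize $\lambda(t)\cdot u$ over all of $\real{n}$. A linear function has a finite minimizer over $\real{n}$ only when its coefficient is zero, so any extremal must satisfy $\lambda(t)=0$. The stationarity condition $\partial_u H=\lambda=0$ gives the same conclusion, and so does constancy of the Hamiltonian together with free final time. In particular $\lambda(t_f)=0$, which is \eqref{eq:keyDef4ProbB}.

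\emph{Step 3: read off the encoded KKT condition.} Substituting $\lambda(t_f)=0$ into \eqref{eq:tvc-cone} yields $0\in\widetilde{\nu}_0\,\partial g_0(x(t_f))+N_C(x(t_f))$. This is precisely the Fritz John/KKT condition \eqref{eq:JKKT-abstract} with $\nu_0=\widetilde{\nu}_0$.

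The main obstacle is the nontriviality condition of the maximum principle. Since $\lambda\equiv 0$, the nontriviality requirement $(\lambda,\widetilde\nu_0)\neq 0$ must be satisfied by $\widetilde\nu_0>0$ or, in the form where the endpoint multiplier is separated out, by a nonzero normal-cone element. I would check that this matches the Fritz John alternative in \eqref{eq:JKKT-abstract}: either $\nu_0>0$, or $\nu_0=0$ with a nonzero element of $N_C$. This shows the equivalence is not vacuous.

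I would also verify one further point: extremals exist, for instance because reachability in finite time under $\dot x=u$ makes every point of $C$ attainable. It follows that every terminal point satisfying \eqref{eq:JKKT-abstract} is produced by an extremal of this Problem~$(O_B)$.
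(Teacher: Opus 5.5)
Your argument is correct. The paper gives no proof of its own here; it defers to \cite{ross:CD,rossJCAM-1,rossJCAM-2}. Your construction is, however, the one its framework points to. Your $\widetilde f(x,u)=u$ is exactly the channel $\dot q_1=u_1$ of Theorem~\ref{thm:dynamics4opt}, and the remark after that theorem records that the Pontryagin Hamiltonian of $(O_B)$ is linear in the rate control and that its minimized value vanishes along extremals.

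The only real difference is the mechanism that kills the costate. You take $u$ unconstrained, so the minimum condition alone forces $\lambda\equiv 0$. This is the most elementary route and does not even need $t_f$ free. The route indicated in the paper, which elsewhere insists on compact control sets $\U$, uses free final time instead: the minimized Hamiltonian $\min_{u\in\U}\langle\lambda,u\rangle$ must vanish, and this forces $\lambda=0$ provided $0$ lies in the interior of $\U$. That version needs the free-time condition, but it stays compatible with the bounded control sets used in the later inverse-optimality machinery.

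One correction to your nontriviality discussion. In the maximum principle the paper cites, nontriviality reads $(\lambda,\widetilde\nu_0)\neq 0$, so $\lambda\equiv 0$ forces $\widetilde\nu_0>0$. Every extremal of your problem is therefore normal, and the abnormal branch you try to match never arises. This loses nothing, because for closed $C$ and Lipschitz $g_0$ every local minimizer of $(O_A)$ already satisfies $0\in\partial g_0(x_f)+N_C(x_f)$.

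It does make your closing converse too strong. With $\nu_0=0$ permitted, \eqref{eq:JKKT-abstract} holds at every point of $C$, since $0\in N_C$. Yet a point of $C$ that is not a normal KKT point is the endpoint of no extremal. Likewise, reachability by itself does not produce extremals. What does is a point with $0\in\partial g_0(x_f)+N_C(x_f)$: any admissible trajectory from $x^0$ to it, together with $\lambda\equiv 0$ and $\widetilde\nu_0=1$, is an extremal.
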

%==================
\begin{remark}
By definition, a hidden algorithm primitive is convergent to a candidate optimal solution to Problem~$(O_A)$.
\end{remark}
%=================
%
It is apparent that a discretization of a hidden algorithm primitive produces an algorithm.  However, \emph{it is extremely important to note at the very outset that we do not propose to produce algorithms by generating differential equations and discretizing them afterwards}; rather, it will be apparent as we develop the ideas that the eventual formulas for generating algorithms do not directly depend upon producing hidden algorithm primitives.  Yet, the concept of a hidden algorithm primitive will be pervasive to the logical process of understanding the natural mathematical physics of optimization articulated in terms of a Hamilton-Jacobi theory.  Once these equations are developed, we will combine the ideas of inverse optimality\cite{glad87,freeman} and proximal aiming\cite{prox-aiming-1994,clarkeLyap} to produce algorithms without ever generating hidden algorithm primitives.

\section{A Preliminary Discussion of the Proposed Ideas}\label{sec:prelim-discuss}
As a stand-alone concept, the transversality mapping principle inverts conventional wisdom.  That is, it seems to suggest that optimal control theory can be used to solve an optimization problem instead of the other way around as is conventionally presumed\cite{bazaraa-2006,NW:NumOptBook,luenberger-2008}. Indeed, in the absence of additional information, this inversion seems ill-advised.  All of these undesirable motivational features are acknowledged in \cite{rossJCAM-1} wherein the initial ideas were first formulated. However, as noted at the end of Section~\ref{sec:intro}, we will never generate an extremal state trajectory $t \mapsto x(t)$ for Problem~$(O_B)$; yet, its idea will be pervasive and critical to the production of a practical algorithm for solving Problem~$(O_A)$. This is why a hidden algorithm primitive is indeed hidden.

It is critically important to note at this juncture that the central feature of the ideas introduced in Section~\ref{sec:intro} is \emph{not} to take the limits of existing algorithms to generate ordinary differential equations (ODEs) and/or to modify the resulting ODEs to improve algorithmic efficiency. This is a subject of a large body of work\cite{boggs71, smale76, brown+biggs, gavurin, polyak64, NLP2ODE-1980,NLP2ODE-1981,NLP2ODE-1989,NLP2ODE-1994,NLP2ODE-2006,NLP2ODE-2007,NLP2ODE-2017, ODEinML-2016-1,ODEinML-2016-2,ODEinML-2016-3, ODEinML-2019,ODEinML-2021-1,ODEinML-2021-2,ODEinML-2021-3, ODEinML-2021-4,ODEinML-2021-5,lemarechal-cauchy-grad-2012,curry-cauchy-grad-1944} with many interesting side stories. For instance, in 1958, Gavurin\cite{gavurin} produced the continuous Newton-flow equation by taking the limit of Newton's method and analyzing the resulting ODE; see also Smale\cite{smale76}.  Over a hundred years earlier, Cauchy (according to Lemar\'{e}chal\cite{lemarechal-cauchy-grad-2012}) performed the opposite: he proposed the gradient method based on the continuous gradient-flow equation\cite{curry-cauchy-grad-1944}. 
Recent renewed interest in using ODEs as models for optimization algorithms can be broken down into two categories: one that is focused on solving nonlinear programming problems\cite{NLP2ODE-1980,NLP2ODE-1981,NLP2ODE-1989,NLP2ODE-1994,NLP2ODE-2006,NLP2ODE-2007,NLP2ODE-2017}  and the other that is largely concentrated on ``accelerated'' optimization algorithms for unconstrained or convex optimization problems\cite{ODEinML-2016-1,ODEinML-2016-2,ODEinML-2019,ODEinML-2021-1,ODEinML-2021-2,ODEinML-2021-3, ODEinML-2021-4,ODEinML-2021-5}.  In many ways, the ongoing spike in interest in using ODEs to explain or improve accelerated optimization algorithms can be considered as an evolution of the early ideas\cite{gavurin, polyak64, boggs71, smale76,curry-cauchy-grad-1944,lemarechal-cauchy-grad-2012,brown+biggs}  that lay somewhat dormant prior to the machine learning revolution\cite{bengio:deepLearning,MLopt:SIREV}.  It is apparent from this brief review of the rich literature that the concepts introduced in Section~\ref{sec:intro} are sharply different from these prior works in almost all aspects.  To drive home this point, we explicitly identify some of these fundamental differences as follows:
\begin{enumerate}
\item The key idea of equating the first-order necessary conditions of an optimization problem to the transversality condition of an optimal control problem was first initiated in \cite{rossJCAM-1} and represents a marked departure from all prior works.  This paper is an advancement of the theory introduced in \cite{rossJCAM-1}.

\item The proposed theory \emph{never} employs taking limits of existing algorithm, as, for example, in \cite{smale76,ODEinML-2016-1,ODEinML-2021-5}; rather, it is based on an independent production of algorithms without using any a priori knowledge of existing ones.  This is part of the reason behind Definition~\ref{def:algor-primitive}.
\item As presented in Section~\ref{sec:intro}, sheer intellectual curiosity pertaining to \eqref{eq:keyDef4ProbB} is the primary motivation rather than a need to produce a new theory for optimization.  The fact that this intellectual curiosity leads to a new and natural ``physics'' of optimization (as developed in the remainder of this paper) is an a posteriori result.
\item It is evident that the ideas presented in Section~\ref{sec:intro} are not limited to unconstrained optimization or convex optimization or first- or second-order methods\cite{ODEinML-2016-1,ODEinML-2016-2,ODEinML-2019,ODEinML-2021-1,ODEinML-2021-2,ODEinML-2021-3, ODEinML-2021-4,ODEinML-2021-5}.  It will be apparent later that \eqref{eq:IVPalg4probA} in the ensuing theory is merely a projection of a larger set of controllable differential equations. Hence, there is also no presumption of an order in terms of the order of an ODE.
\item As shown in \cite{rossJCAM-2}, the proposed theory explains the ``mystery'' surrounding Nesterov's accelerated gradient algorithm\cite{nesterov83}.  This explanation is not achieved by taking the limits of Nesterov's original algorithm\cite{nesterov83} to generate ODEs\cite{ODEinML-2016-1,ODEinML-2021-5}; rather, Nesterov's algorithm is shown to be a particular consequence of \eqref{eq:keyDef4ProbB} and an unconstrained version of Problem~$(O_B)$.  In other words, Nesterov's algorithm is \emph{derived} in \cite{rossJCAM-2} and not assumed.  This derivation is based on seeking $x(\cdot) \in W^{2,\infty}([t_0, t_f], \real{n})$ as an engineering technique\cite{ross-book} to reduce the total variation of a hidden algorithm primitive.  See also \cite{ross:accelerated-arxiv} and Section~\ref{sec:acceleration} of this paper.

\item It will be apparent later that the control vector $\widetilde{u}$ of Problem~$(O_B)$ is a proxy for the search vector of an algorithm to solve Problem~$(O_A)$. Suppose this control vector is generated by a feedback law $\widetilde{u} = \widetilde{K}(x, t)$.  Any given feedback control changes the ODE in \eqref{eq:IVPalg4probA} to a different one given by $\dot x = \widetilde{f}(x, \widetilde{K}(x, t))$. This idea of systematically generating ODEs is well-known in control theory\cite{clarke-2013book,vinter,sontag-book} and forms the basis for algorithm generation presented in \cite{NLP2ODE-2006,ODEinML-2016-3}.  These prior ideas of using control theory for optimization do not incorporate \eqref{eq:keyDef4ProbB}; rather, they are based on drawing analogies between control theory and optimization.  Furthermore, as implied earlier, the production of algorithms as proposed in this paper will never require the simulation of $\dot x = \widetilde{f}(x, \widetilde{K}(x, t))$; hence, there is no discretization and propagation of an ODE.
\item It is useful to note at this juncture that any ODE given by $\dot x = f(x,t)$ may be viewed as a control ODE $\dot x = u$ with feedback law $u = f(x,t)$.
    \item It will be apparent later that a search direction for a candidate inverse optimal algorithm is completely determined by two factors:
        \begin{enumerate}
        \item a selection of a state-dependent constraint set $\U$ for the control vector, and
        \item a choice of a search Lyapunov function (SLF) that satisfies a Hamilton-Jacobi inequality.
        \end{enumerate}
        Although Lyapunov functions are widely used in optimization, the notion of an SLF is unique to this paper, albeit a generalization of the concept of a control Lyapunov function used in \cite{rossJCAM-1,ross:CD,rossJCAM-2}. Furthermore, as shown in \cite{rossJCAM-1}, a selection of $\U$ has the effect of metricizing the search space.  See also Section~\ref{sec:examples} of this paper.
    \item The same ideas (outlined in Section~\ref{sec:intro}) applies to both constrained and unconstrained optimization problems. In fact, one of the key strengths of the proposed theory is its universal management of constraints.
\end{enumerate}

A theory that explains the collection of the preceding ideas and more is the subject of this paper.

\section{A Word About Notation}\label{sec:notation}
There is little doubt almost all readers will be dissatisfied with the notation used in this paper. Starting with the very first equation, it is obvious that we did not use the standard notation $x$ for the optimization variable in \eqref{eq:OP-A}.  Doing so would require us to use a nonstandard notation for the state variable $x$ in \eqref{eq:prob-Btilde}. Needless to say, we have chosen in Section~\ref{sec:intro} to use the conventional optimal-control symbol $x$ for the state variable at the price of abandoning convention in optimization. Because the same symbol across diverse disciplines have different conventional interpretations, we note upfront that the notation used in this paper, while customary in some fields, may be atypical in another.  For example, because $x$ and $f(x)$ are one of the most overused symbols, we have chosen to abandon them in favor of $q$ for the state variable, a customary notation for generalized coordinates in physics. Similarly, we use the symbol $L$ for the generalized Lagrangian in optimization and not in the sense of a Lagrange cost in optimal control. In the same spirit, we do not use $\nabla_q$ for the gradient operator as is customary in optimization.  Instead, we use $\partial_q$ for the gradient and $\partial^2_q$ for the Hessian while retaining $\partial$ (without a subscript) to indicate the limiting/Mordukhovich subdifferential (which we use sparingly).  Other aspects of our notation are explained during their first occurrence.

\section{A Hamilton-Jacobi Theory for Optimal Hidden Algorithm Primitives}\label{sec:HJ-theory}
As remarked in Section~\ref{sec:prelim-discuss}, Cauchy's inspiration for creating the gradient algorithm was the physics of a flow (motion). We briefly note that even though a gradient algorithm may be ``obvious'' today, its invention did not come about until more than a hundred years after Newton's method\cite{lemarechal-cauchy-grad-2012,curry-cauchy-grad-1944}.  Yet another concept borrowed from physics is Polyak's momentum method\cite{polyak64} that spawned Nesterov's accelerated gradient algorithm\cite{nesterov83}.  In other words, it is not uncommon to design optimization algorithms using the natural physics of motion. Rather than use Nature's laws of motion, here we aim to seek the more fundamental physics of optimization itself using the ideas introduced in Section~\ref{sec:intro}.  Because basic physics is governed by equations rather than abstract sets, we shall now limit the remainder of this paper to discussions wherein the constraint set $C$ in \eqref{eq:OP-A} is parameterized by continuously differentiable functions.  See Section~\ref{sec:nonsmooth-stuff} for some comments on nonsmooth optimization. In addition, per the remarks of Section~\ref{sec:notation}, it will be apparent shortly that it is notationally convenient to depart from the convention of using $x \in \real{n}$ for the unknown variable.  Hence, for the remainder of this paper, we consider the optimization variable to be part of a larger set of generalized coordinates $q \in \real{N}$, $N > n$, and relabel $x$ to be $q_1 \in \real{n}$.  As a result, \eqref{eq:OP-A} is parameterized and reformulated as,
%
%---------------------------------------------------------
\begin{eqnarray}\label{eq:prob-N}
&(N) \left\{
\begin{array} {lll}
\displaystyle\mathop\text{minimize }_{q_1 \in \real{n}}  & g_0(q_1) \\
\text{subject to} & g^L \le g(q_1) \le g^U
\end{array} \right.&
\end{eqnarray}
%---------------------------------------------------------
%
where $g(q_1):= (g_1(q_1), \ldots, g_{m}(q_1))$ is a constraint function with $m \in \mathbb{N}$ components whose values are restricted to lie between some specified lower and upper bounds given by $g^L \in \real{m}$ and $g^U \in \real{m}$ respectively.  If any component of $g^L$ is equal to $g^U$, then the problem formulation contains an equality constraint.  Consequently, Problem~$(N)$ defines a general problem formulation that permits both equality and inequality constraints.

\subsection{The Natural Dynamics of Hidden Algorithm Primitives}

Using Definition~\ref{def:algor-primitive}, a hidden algorithm primitive for \eqref{eq:prob-N} can be articulated as a continuous-time evolution of $t \mapsto q_1(t)$ from an initial state $q_1(t_0)$ (i.e., a starting point of an algorithm) to a final state $q_1(t_f)$ where the latter is a candidate optimal point (i.e., a point that satisfies the algebraic version of \eqref{eq:JKKT-abstract} for Problem~$(N)$).
%
%=================================
\begin{theorem}[Natural Dynamics of Optimization] \label{thm:dynamics4opt}
Let $q:= (q_0, q_1, q_2, q_3, q_4, q_5) \in \Real \times \real{n} \times \real{m} \times \real{n} \times \real{m} \times \Real$ and $u:= (u_0, u_1, u_2) \in \Real \times \real{n} \times \real{m}$.
Define $L(q_0, q_1, q_2) := q_0\, g_0(q_1) +  \left\langle q_2, g(q_1) \right\rangle$ to be the generalized Lagrangian function\cite{clarke-green-book,bazaraa-2006,luenberger-2008} associated with the data functions of \eqref{eq:prob-N} that are assumed to be twice continuously differentiable.  Then
all hidden algorithm primitives for Problem~$(N)$ are governed by the dynamical equation, $\dot q(t) = F(q(t), u(t))$, where $F(\cdot, \cdot)$ is given explicitly by the right-hand-side of the following differential equation:
%
%-------------------------------------
\begin{equation}\label{eq:q-dynamics}
(D) \left\{
\begin{aligned}
\dot q_0(t) &=u_0(t)\\
\dot q_1(t) & = u_1(t) \\
\dot q_2(t) & = u_2(t)\\
\dot q_3(t)  &= -\big[\partial^2_{q_1}L(q_0(t), q_1(t), q_2(t))\big] \, u_1(t) - \partial_{q_1}L(u_0(t), q_1(t), u_2(t)) \\
\dot q_4(t) &=  \big[\partial_{q_1}g(q_1(t))\big] u_1(t)\\
\dot q_5(t) & = \langle \partial_{q_1} g_0(q_1(t)), u_1(t) \rangle
\end{aligned}
\right.
\end{equation}
%=======================================
%
\end{theorem}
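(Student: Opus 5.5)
The plan is to treat the ``dynamics'' as the time derivatives of the quantities whose terminal values must satisfy the Karush/John--Kuhn--Tucker conditions of Problem~$(N)$. By the Transversality Mapping Principle and Definition~\ref{def:algor-primitive}, a hidden algorithm primitive is a curve $t\mapsto q_1(t)$ whose endpoint $q_1(t_f)$ satisfies the algebraic version of \eqref{eq:JKKT-abstract}. For Problem~$(N)$ this reads
\[
\partial_{q_1}L(\nu_0,q_1,\nu)=0,\qquad g^L\le g(q_1)\le g^U,
\]
together with the sign and complementarity conditions on $\nu$ and nonnegativity of $\nu_0$. Here the Fritz John multiplier $\nu_0$ and the constraint multiplier $\nu$ are unknowns on the same footing as $q_1$. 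So I would first promote them to time-varying coordinates $q_0(t)$ and $q_2(t)$. Their terminal values are required to equal $(\nu_0,\nu)$, while their evolution in between is unconstrained. With no further information, the most general (control-affine, fully actuated) evolution is $\dot q_i=u_i$ for $i=0,1,2$. This is justified by item 7 of Section~\ref{sec:prelim-discuss}: any ODE $\dot q_i = f_i(q,t)$ is the control ODE $\dot q_i=u_i$ under the feedback $u_i=f_i(q,t)$. Hence these three equations encompass every possible primitive.

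Next I introduce state variables that track the data functions appearing in the optimality conditions along the trajectory:
\[
q_3(t):=-\partial_{q_1}L(q_0(t),q_1(t),q_2(t)),\qquad q_4(t):=g(q_1(t)),\qquad q_5(t):=g_0(q_1(t)).
\]
With these, the terminal conditions become conditions on $q(t_f)$ alone: $q_3(t_f)=0$, $q_4(t_f)\in[g^L,g^U]$ with complementarity linking it to $q_2(t_f)$, and $q_5(t_f)$ the cost. Differentiating by the chain rule, using $C^2$ regularity of the data, gives the last three lines of $(D)$. For $q_4$ and $q_5$ this is immediate:
\[
\dot q_4=[\partial_{q_1}g]\,u_1,\qquad \dot q_5=\langle\partial_{q_1}g_0,u_1\rangle .
\]
For $q_3$ I use that $L$ is linear in $(q_0,q_2)$. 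Thus $\partial_{q_1}L(q_0,q_1,q_2)=q_0\,\partial_{q_1}g_0+[\partial_{q_1}g]^{T}q_2$, and its time derivative is
\[
[\partial^2_{q_1}L(q_0,q_1,q_2)]\,\dot q_1+\dot q_0\,\partial_{q_1}g_0+[\partial_{q_1}g]^{T}\dot q_2 .
\]
The last two terms are exactly $\partial_{q_1}L(u_0,q_1,u_2)$. Substituting $\dot q_i=u_i$ and the minus sign yields the stated $\dot q_3$.

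Finally, I argue the ``all'' part: every hidden algorithm primitive is governed by $(D)$. Given any primitive $t\mapsto q_1(t)$, together with any absolutely continuous multiplier paths $q_0(\cdot),q_2(\cdot)$, define $u_i:=\dot q_i$ for $i=0,1,2$ and define $q_3,q_4,q_5$ as above. This produces a solution of $(D)$, so $(D)$ loses no primitive. Conversely, every solution of $(D)$ started from consistent initial data, i.e.\ $q_3(t_0)=-\partial_{q_1}L(q(t_0))$, $q_4(t_0)=g(q_1(t_0))$, $q_5(t_0)=g_0(q_1(t_0))$, keeps these identities for all $t$ by uniqueness of integration. So $(D)$ carries exactly the optimality information, and \eqref{eq:keyDef4ProbB} becomes a terminal-set condition on $q(t_f)$.

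The main obstacle is conceptual rather than computational. It is pinning down what ``all hidden algorithm primitives are governed by'' means precisely, given that Problem~$(O_B)$ is only asserted to exist. I would handle it by the two-way embedding argument above, checking that the dynamics impose no restriction on $q_1$ beyond absolute continuity, with the controls $u$ taken in $L^\infty$ or $L^1$ as appropriate.
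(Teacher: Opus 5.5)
Your proposal is correct and follows essentially the same route as the paper. Like the paper, you promote the multipliers to states with $\dot q_i=u_i$, define $q_3,q_4,q_5$ as $-\partial_{q_1}L$, $g$, $g_0$ along the trajectory, differentiate by the chain rule using linearity of $L$ in $(q_0,q_2)$, and observe that the KJKT conditions become $q(t_f)\in T$. The only difference is bookkeeping: the paper says these quantities differ from $q_3,q_4,q_5$ by a constant of motion, whereas you set those constants to zero through consistent initial data and add the converse embedding, which sharpens the ``all'' claim. The paper makes the same zero-constant choice later, via \eqref{eq:guess-00} and the integrals of motion in Lemma~\ref{lemma:max-h}.
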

%----------------------------------------------
\begin{proof}
The proof of this theorem follows from \cite{rossJCAM-1} wherein a version of \eqref{eq:q-dynamics} is derived from first principles based on optimal control theory.  Here we provide a different and shorter version of a proof but one that relies on optimization theory.

Differentiating $t \mapsto q_1(t)$ with respect to $t$ and setting its derivative to $u_1(t)$ produces,
\begin{equation}\label{eq:q1dot=u1}
\dot q_1(t) := u_1(t)
\end{equation}
Doing the same for the cost and constraint multipliers (i.e., considering $q_0$ and $q_2$ to be functions of $t$ and differentiating) yields,
\begin{align}
\dot q_0(t) := u_0(t)\\
\dot q_2(t) := u_2(t) \label{eq:q2dot=u2}
\end{align}
Define,
\begin{equation}\label{eq:q3=bydef}
\widetilde{q}_3(t):= -\partial_{q_1}L(q_0(t), q_1(t), q_2(t))
\end{equation}
Differentiating \eqref{eq:q3=bydef} with respect to $t$ and using \eqref{eq:q1dot=u1}--\eqref{eq:q2dot=u2} we get,
\begin{align}
\dot{\widetilde{q}}_3(t) &= -\big[\partial^2_{q_1}L(q_0(t), q_1(t), q_2(t))\big] u_1(t) - \partial_{q_1}g_0(q_1(t)) u_0(t) - \big[\partial_{q_1}g(q_1(t))\big]^T u_2(t)  \nonumber\\
&= -\big[\partial^2_{q_1}L(q_0(t), q_1(t), q_2(t))\big] \, u_1(t) - \partial_{q_1}L(u_0(t), q_1(t), u_2(t))
\end{align}
Hence $\widetilde{q}_3(t)$ and $q_3(t)$ differ by a constant of motion. Similarly, by defining
\begin{equation}\label{eq:q4=bydef}
\widetilde{q}_4(t):= g(q_1(t))
\end{equation}
and differentiating \eqref{eq:q4=bydef} with respect to $t$  we get,
\begin{equation}
\dot{\widetilde{q}}_4(t) =  \big[\partial_{q_1}g(q_1(t))\big] u_1(t)
\end{equation}
As a result, $\widetilde{q}_4(t)$ and $q_4(t)$ differ only by a constant of motion. Finally, we set,
\begin{equation}\label{eq:q5=bydef}
\widetilde{q}_5(t):= g_0(q_1(t))
\end{equation}
Differentiating \eqref{eq:q5=bydef} with respect to $t$ generates,
\begin{equation}
\dot{\widetilde{q}}_5(t)  = \langle \partial_{q_1} g_0(q_1(t)), u_1(t) \rangle
\end{equation}
thereby implying that $\widetilde{q}_5(t)$ and $q_5(t)$ differ by a constant of motion. Thus, the dynamical system given by \eqref{eq:q-dynamics} describes an evolution of $t \mapsto q(t)$ when subject to a control action $t \mapsto u(t)$ where $q(t) = (q_0(t), q_1(t), q_2(t), q_3(t), q_4(t), q_5(t))$ represents, up to a constant of motion, the instantaneous values of the (Fritz John) cost multiplier $q_0$, the optimization variable $q_1$, the constraint (Karush-Kuhn-Tucker) multiplier $q_2$, the (negative of the) gradient of the generalized Lagrangian $q_3$, the constraint function $q_4$, and the cost function $q_5$.  Similarly, the controls $u(t) = (u_0(t), u_1(t), u_2(t)$ represent the instantaneous values of the rates of change in the cost multiplier, the optimization variable and the constraint multiplier respectively. Thus, the dynamical system~$(D)$ represents the evolution of all of the primal and dual variables and functions associated with Problem~$(N)$ and its first-order necessary conditions.

The first-order necessary condition for Problem~$(N)$, stated in terms of the final-time conditions of the dynamical system $(D)$, is given by\cite{luenberger-2008,bazaraa-2006},
\begin{eqnarray}
q(t_f) \in T:= \left\{q:\ q_0 \ge 0,\  (q_0, q_2) \ne (0, 0), \ q_3 = 0, \ \ g^L \le q_4 \le g^U,
 \ \ q_2\, \dag\,q_4 \right\}\label{eq:T-def}
\end{eqnarray}
where, $\dag$ denotes the complementarity conditions defined by\cite{luenberger-2008,bazaraa-2006, ross-book},
\begin{equation}\label{eq:cc}
q_2\, \dag\, q_4 \ \Leftrightarrow \ q_2^i  \left\{
                                                              \begin{array}{llr}
                                                                \le 0 & \hbox{if }  & q_4^i = g^{L}_i\\[0.5em]
                                                                = 0 & \hbox{if }  & g^{L}_i < q_4^i < g^{U}_i \\ [0.5em]
                                                                \ge 0 & \hbox{if } & q_4^i = g^{U}_i \\ [0.5em]
                                                                \textit{unrestricted} & \hbox{if } & g^{L}_i = g^{U}_i
                                                              \end{array}
                                                            \right.
\end{equation}
and the superscript $i=1, \ldots, m$ on $q_2$ and $q_4$ denotes their $i^{th}$ component. These necessary conditions are the Karush/John-Kuhn-Tucker conditions\cite{bazaraa-2006,NW:NumOptBook,luenberger-2008,clarke-green-book}. Because these necessary conditions only involve the variables $q_0,  q_2, q_3$ and $q_4$, it follows that all hidden algorithm primitives satisfy the dynamical equations given by \eqref{eq:q-dynamics}.
\end{proof}
%==========================================
\begin{remark}
The dynamical equations given in \eqref{eq:q-dynamics} are contained among the various equations presented in \cite{rossJCAM-1} but without the statement of Theorem~\ref{thm:dynamics4opt}.  Hence, Theorem~\ref{thm:dynamics4opt} can also be proved using the first-principles approach of \cite{rossJCAM-1}.  However, because this first-principles' approach is lengthy, a shorter proof is presented here but one that relies on the theorems of Karush, John, Kuhn and Tucker\cite{bazaraa-2006,NW:NumOptBook,luenberger-2008}.
\end{remark}
%-------------------------
\begin{remark}
It is shown in \cite{rossJCAM-1} that the Pontryagin Hamiltonian for Problem~$(O_B)$ is given by $\langle (q_3(t) + \partial_{q_1}L(q_0(t), q_1(t), q_2(t))),\ u_1(t) \rangle $ and that the minimized Hamiltonian vanishes along an extremal.  As a result we get \eqref{eq:q3=bydef} with $\widetilde{q}_3(t) = q_3(t)$ for $u_1(t) \ne 0$.  In the current proof of Theorem~\ref{thm:dynamics4opt} we have simply chosen to define $\widetilde{q}_3(t)$ according to \eqref{eq:q3=bydef}. If $\widetilde{q}_3(t)$ and hence $q_3(t)$ is defined without the negative sign in \eqref{eq:q3=bydef}, Theorem~\ref{thm:dynamics4opt} holds but with a corresponding change in the sign of the $q_3$-dynamics in \eqref{eq:q-dynamics}. Choosing to define $q_3(t)$ with the negative sign simply maintains the consistency of Theorem~\ref{thm:dynamics4opt} with the first-principles' results of \cite{rossJCAM-1}.  Furthermore, the natural definition of $q_3(t)$ with the negative sign generates an asymmetric saddle point matrix when the resulting ideas are applied to address a constrained optimization problem\cite{rossJCAM-1}.  See also Section~\ref{sec:examples}.  The asymmetric saddle point matrix has (under appropriate conditions\cite{saddlePt-05,saddlePt-06,saddlePt-08})  better numerical properties than its symmetric counterpart.
\end{remark}
%======================
In view of Theorem~\ref{thm:dynamics4opt}, we now provide a more precise definition of a hidden algorithm primitive for Problem~$(N)$ that is consistent with the use of the symbol $q \in \real{2(1 +n + m)}$.
%
%--------------------
\begin{definition}[\emph{Hidden Algorithm Primitive for Problem~$(N)$}]\label{def:algor-primitive-P}
Let $\dot q(t) = F(q(t),u(t))$ be defined by \eqref{eq:q-dynamics}. Suppose there exists a (measurable) control function $t \mapsto u(t) \in \real{1+n+m}$ that drives a given a point $q(t_0) = q^{00} \in \real{2(1 +n + m)}$ to $q(t_f) \in T$, where $T$ is given by \eqref{eq:T-def}.  Then a (Carath\'{e}odory) solution $[t_0, t_f] \ni t \mapsto q(t)$ to the IVP,
\emph{}\begin{equation}\label{eq:algorithm-primitive-IVP}
    \dot q= F(q,u(t)), \quad q(t_0) = q^{00}
\end{equation}
is called a hidden algorithm primitive for Problem~$(N)$.
\end{definition}

\subsection{A Hidden-Algorithm-Primitive Formulation of Optimal Optimization }

Let $\U(q,t) \subset  \real{1+n+m} $ be a compact set of allowable values of $u$ that jointly depends on $q$ and $t$. Let $t \mapsto q(t)$ be a hidden algorithm primitive generated by a control function $t \mapsto u(t) \in \U(q(t), t)$ acting on $\dot q = F(q, u)$ from some given point $q(t_0) = q^{00}$.  Suppose the optimality of the resulting hidden algorithm primitive $q(\cdot)$ is determined by a cost functional defined by\cite{ross-book,vinter},
\begin{equation}
J[q(\cdot), u(\cdot), t_f] :=  \ell(q(t_f), t_f) + \int_{t_0}^{t_f} \mathcal{L}(q(t), u(t), t)\, dt
\end{equation}
where $\ell:(q, t) \mapsto \Real$ is a given differentiable endpoint (or Mayer) cost function and\linebreak $\mathcal{L}: (q, u, t) \mapsto \Real$ is a given differentiable running (or Lagrange) cost function. Then the optimal control problem that defines an optimal hidden algorithm primitive for Problem~$(N)$ is given by,
%
%---------------------------------------------------------
\begin{eqnarray}\label{eq:prob-R}
&(M) \left\{
\begin{array} {lll}
\displaystyle\mathop\text{minimize }_{[q(\cdot), u(\cdot), t_f]}  & J[q(\cdot), u(\cdot), t_f] \\
\text{subject to} & \dot q(t) = F(q(t), u(t))\\
                    & u(t) \in \U(q(t), t)\\
                    & q(t_0) = q^{00}\\
                    & q(t_f) \in T
\end{array} \right.&
\end{eqnarray}
%---------------------------------------------------------
%
\begin{remark}
At this stage, it is not particularly important to define a precise functional form for $\ell$ and/or $\mathcal{L}$ other than the implication that they are chosen by the problem designer.
\end{remark}
%=======================================
The following are standard definitions in optimal control theory\cite{clarke-green-book,vinter,ross-book}:

%-------------------------
\begin{definition}(Pontryagin Hamiltonian)\label{def:HP}
The Pontryagin Hamiltonian $H\left(q, \lambda, u, t \right)$ for Problem~$(M)$ is defined by\cite{clarke-green-book,vinter,ross-book},
\begin{equation}\label{eq:HP}
H\left(q, \lambda, u, t \right) := \mathcal{L}(q, u, t) + \left\langle \lambda, F(q, u) \right \rangle
\end{equation}
where, $\lambda \in \real{2(1+n+m)}$ is a Pontryagin adjoint covector.
\end{definition}
%-----------------
\begin{definition}[Lower Hamiltonian]
The lower Hamiltonian $\mathcal{H}\left(q, \lambda, t\right)$ for Problem~$(M)$ is defined by\cite{clarke-green-book,ross-book},
\begin{equation}\label{eq:H=minOfHP}
 \mathcal{H}\left(q, \lambda, t\right) := \min_{u \in \U(q, t)} H\left(q, \lambda, u, t \right)
\end{equation}
\end{definition}
%-------------------
\begin{definition}(Hamilton-Jacobi Equation)\label{def:HJE}
The Hamil\-ton-Jacobi equation for Problem~$(M)$ is the partial differential equation given by\cite{vinter,clarke-2013book},
\begin{equation}\label{eq:HJ}
\mathcal{H}\left(q, \partial_q V(q,t), t\right)  + \partial_t V(q,t) = 0
\end{equation}
where the function $V: (q, t) \mapsto \Real$ satisfies the boundary condition,
\begin{equation}
V(q(t_f), t_f) = \ell(q(t_f), t_f)
\end{equation}
over all values of $q(t_f) \in T$.
\end{definition}
%-------------------
%
\begin{theorem}[Optimal Optimization]\label{thm:verification}
Let $t \mapsto q^\flat(t)$ be a hidden algorithm primitive for Problem~$(N)$ generated by the action of a control function $t \mapsto u^\flat(t)$ from the point $q(t_0) = q^{00}$. Suppose there exists a continuously differentiable function $\varphi: (q,t) \mapsto \Real$ that satisfies
\begin{subequations}
\begin{align}
\mathcal{H}\left(q, \partial_q \varphi(q,t), t\right)  + \partial_t\varphi(q,t) &\ge 0 &\forall\ q, \forall\ t \in [t_0, t_f] \label{eq:HJI>0}\\
\varphi(q(t_f), t_f) &= \ell(q(t_f), t_f) & \forall q(t_f) \in T \\
\varphi(q(t_0), t_0) & = J[q^\flat(\cdot), u^\flat(\cdot), t^\flat_f]
\end{align}
\end{subequations}
Then $q^\flat(\cdot)$ is an optimal hidden algorithm primitive with optimal value given by $\varphi(q^{00}, t_0)$.
\end{theorem}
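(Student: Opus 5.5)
This is a standard verification argument: integrate the Hamilton--Jacobi inequality \eqref{eq:HJI>0} along an arbitrary admissible pair of Problem~$(M)$. Fix any feasible triple $[q(\cdot),u(\cdot),t_f]$ for Problem~$(M)$. Here $q(\cdot)$ is a Carath\'{e}odory solution of \eqref{eq:algorithm-primitive-IVP} with $u(t)\in\U(q(t),t)$ a.e., $q(t_0)=q^{00}$ and $q(t_f)\in T$; by Definition~\ref{def:algor-primitive-P} this is exactly a hidden algorithm primitive. The goal is to show $J[q(\cdot),u(\cdot),t_f]\ge \varphi(q^{00},t_0)$. Together with the third hypothesis, $\varphi(q^{00},t_0)=J[q^\flat(\cdot),u^\flat(\cdot),t^\flat_f]$, this gives optimality of $q^\flat(\cdot)$ and identifies the optimal value.

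\textbf{Step 1.} Since $\varphi$ is $C^1$ and $q(\cdot)$ is absolutely continuous, the map $t\mapsto\varphi(q(t),t)$ is absolutely continuous, and by the chain rule, for a.e. $t$,
\[
\frac{d}{dt}\varphi(q(t),t)=\partial_t\varphi(q(t),t)+\langle \partial_q\varphi(q(t),t),F(q(t),u(t))\rangle .
\]

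\textbf{Step 2.} Since $u(t)\in\U(q(t),t)$, the definition \eqref{eq:H=minOfHP} of the lower Hamiltonian gives
\[
\mathcal{L}(q(t),u(t),t)+\langle\partial_q\varphi,F(q(t),u(t))\rangle \ge \mathcal{H}(q(t),\partial_q\varphi(q(t),t),t).
\]
Adding $\partial_t\varphi$ and applying \eqref{eq:HJI>0} then yields
\[
\mathcal{L}(q(t),u(t),t)+\frac{d}{dt}\varphi(q(t),t)\ge 0 \quad\text{a.e.}
\]

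\textbf{Step 3.} Integrate over $[t_0,t_f]$:
\[
\int_{t_0}^{t_f}\mathcal{L}\,dt+\varphi(q(t_f),t_f)-\varphi(q^{00},t_0)\ge0 .
\]
Because $q(t_f)\in T$, the boundary hypothesis replaces $\varphi(q(t_f),t_f)$ by $\ell(q(t_f),t_f)$. This gives $J[q(\cdot),u(\cdot),t_f]\ge\varphi(q^{00},t_0)=J[q^\flat,u^\flat,t^\flat_f]$. Since $q^\flat(\cdot)$ is itself feasible, being a hidden algorithm primitive, it attains this lower bound, so it is optimal with value $\varphi(q^{00},t_0)$.

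\textbf{Main obstacle.} The argument is essentially routine; the only point requiring care is the integrability and measurability needed in Steps 1--3. This includes integrability of $t\mapsto\mathcal{L}(q(t),u(t),t)$ (tacitly assumed, since $J$ is well defined for feasible processes) and the fact that the inequality \eqref{eq:HJI>0}, stated for $t\in[t_0,t_f]$ with $t_f$ that of the reference solution, must hold on the time interval of each competitor. I would handle the latter by reading \eqref{eq:HJI>0} as holding for all relevant $t$, i.e.\ over the free final time horizon. If that is not granted, I would restrict the comparison to competitors whose final times lie in the domain where \eqref{eq:HJI>0} holds. Compactness of $\U$ guarantees that the minimum defining $\mathcal{H}$ exists, so no further technicalities arise.
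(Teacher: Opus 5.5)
Your proposal is correct and is the standard verification argument that the paper invokes. The paper gives no proof of its own; it only says the result follows from the well-known verification theorem of optimal control (Clarke, Vinter). Your treatment of the free final time is the right reading of the hypotheses, with two small points left implicit: the terminal condition $\varphi=\ell$ on $T$ (not only the inequality) must also hold at each competitor's final time, and $u^\flat(t)\in\U(q^\flat(t),t)$ is needed for $q^\flat$ to be admissible for Problem~$(M)$, which being a hidden algorithm primitive alone does not give.
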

%-----------------------
\textsl{}A proof of this theorem follows directly from the well-known verification theorem in optimal control\cite{clarke-2013book,vinter}.

From Theorem~\ref{thm:verification}, it follows that a constructive approach to obtain an optimal control and hence an optimal hidden algorithm primitive is to solve the Hamilton-Jacobi equation and use the resulting control function to solve the IVP given by \eqref{eq:algorithm-primitive-IVP} as follows:
\begin{subequations}\label{eq:optHiddenAlgPrim}
\begin{align}
u(q,t) &= \arg \min_{u \in \U(q, t)} H\left(q, \partial_q V(q,t), u, t \right)\\
\dot q &= F(q, u(q,t)), \quad q(t_0) = q^{00}\label{eq:optHiddenAlgPrim-IVP}
\end{align}
\end{subequations}
However, as explained in Section~\ref{sec:intro}, our objective was not to construct an optimal hidden algorithm primitive in an explicit form; rather, our motivation was to seek out the fundamental mathematical physics of optimization.  In this context, Theorem~\ref{thm:verification} and \eqref{eq:optHiddenAlgPrim} accomplish this objective by establishing the fact that the natural physics of optimal optimization (via an optimal hidden algorithm primitive) is described by a Hamilton-Jacobi equation.

\subsection{Some Remarks On the Natural Physics of Optimization}
Although we have not yet fully connected a hidden algorithm primitive to an actual algorithm, some remarks on the developments of the results up to this point are in order. We begin by noting that Theorem~\ref{thm:dynamics4opt} creates a natural vector field $F(\cdot, u)$ over the space $2(1+n+m)$ using the $(1+m)$ data functions of Problem~$(N)$ and its $n$ optimization variables.  This vector field permeates a ``lifted'' space $\real{2(1+n+m)}$ and carries with it the information about the optimality of Problem~$(N)$.  A hidden algorithm primitive is a trajectory in $\real{2(1+n+m)}$. A standard guess to a solution of Problem~$(N)$\cite{luenberger-2008,NW:NumOptBook} is defined by,
\begin{equation}\label{eq:guess-01}
q_0(t_0) = 1, \quad q_1(t_0) = q_1^{00}, \quad q_2(t_0) = q_2^{00}
\end{equation}
where $q_1^{00}$ and $q_2^{00}$ are guesses of the optimization variable and the constraint multiplier respectively. Consequently, an initial point $q^{00} \in \real{2(1+n+m)}$ for Problem~$(M)$ and hence \eqref{eq:optHiddenAlgPrim-IVP} may be generated according to the following construction:
\begin{multline}\label{eq:guess-00}
q_0(t_0) = 1, \quad q_1(t_0) = q_1^{00}, \quad q_2(t_0) = q_2^{00}, \\
q_3(t_0) = - \partial_{q_1}L\left(1, q_1^{00}, q_2^{00}\right), \quad
q_4(t_0) = g\left(q_1^{00}\right), \quad
q_5(t_0) = g_0(q_1^{00})
\end{multline}
Any feasible control function $u(\cdot)$ drives the given initial point $q^{00}$ to a candidate optimal point $q(t_f) \in T$ by sensing the instantaneous values of the derivatives of the data functions of Problem~$(N)$ via the vector field $F(\cdot, u)$.  The minimization of the Pontryagin Hamiltonian is a local action that generates an instantaneous value of the control action predicated on the knowledge of the transversality condition. Thus, a continuous concatenation of the local control action delivers the global result of optimal optimization wherein the optimality criterion is specified by the functional $J[q(\cdot), u(\cdot), t_f]$.  A main theoretical challenge at this stage is to develop meaningful measures of optimality for optimal optimization.  For example, one could conceive the total variation of $q(\cdot)$ as a valid measure of optimality for a hidden algorithm primitive. Thus, a potentially viable path to advance the preceding ideas is to design desirable optimality criteria for optimal optimization. In the next section we embark on a different path toward optimality by way of inverse optimality\cite{glad87,freeman}.  It will be apparent shortly that this approach generates a number of near-term practical results.

\section{A Theory for Generating Inverse Optimal Algorithms}\label{sec:inv-opt-theory}

From Section~\ref{sec:HJ-theory}, it follows that the Hamilton-Jacobi framework constitutes a theoretical foundation for optimal hidden algorithm primitives. To migrate this idea to a practical algorithmic framework, we jointly employ the notion of inverse optimal control\cite{glad87,freeman} and proximal aiming\cite{prox-aiming-1994,clarke-2013book}.  It will be apparent shortly that these two ideas obviate the need to solve the Hamilton-Jacobi equation or directly produce a hidden algorithm primitive although both constructs are essential to the development of a theory for generating inverse-optimal algorithms.

\subsection{Inverse Optimal Hidden Algorithm Primitives}\label{sec:inv-opt-hid-prim}
In inverse optimal control, one flips the roles of the functional $J[q(\cdot), u(\cdot), t_f]$ and the value function $V(q,t)$ in \eqref{eq:HJ}.  That is, instead of solving for $V(q,t)$ for a given functional $J[q(\cdot), u(\cdot), t_f]$, one selects $V(q,t)$ a priori and determines $J[q(\cdot), u(\cdot), t_f]$ afterwards.  This role reversal is made meaningful by the notion that a selection of $V(q,t)$ parameterizes a family of control functions $u(\cdot)$ that indirectly minimizes some cost functional $J[q(\cdot), u(\cdot), t_f]$\cite{haddad-2015,freeman}.   Furthermore, because the task of a control in \eqref{eq:q-dynamics} is to always drive any given point $q(t_0) = q^{00}$ to $q(t_f) \in T$, a hidden algorithm primitive can be reframed in terms of stabilizability (with respect to $T$) of the dynamical system $(D)$. Under this concept, $V(q,t)$ takes the role of a control Lyapunov function\cite{freeman,clarke-2013book,sontag-book}; i.e., a generalization of the classical Lyapunov function.   Thus the link between stabilizability and optimality is that a control Lyapunov function is a meaningful value function\cite{freeman}.

\subsubsection{Partial Stabilizability and Guidability}

The stabilizability requirements for $(D)$ are substantially weaker than those frequently encountered in control theory. For instance, we are only interested in partial stabilizability\cite{vorotnikov-2002,haddad-2015,chellabonia-2002,jammazi-2014,zuev-2000} because the target set $T$ does not directly involve $q_1$ or $q_5$.  In view of this particular requirement, we split $(D)$ into two dynamical systems,
%-------------------------------------
\begin{subequations}\label{eq:D=A+B}
\begin{align}
&(A) \left\{
\begin{aligned}
\dot q_0(t) &=u_0(t)\\
\dot q_2(t) & = u_2(t)\\
\dot q_3(t)  &= -\big[\partial^2_{q_1}L(q_0(t), q_1(t), q_2(t))\big] \, u_1(t) - \partial_{q_1}L(u_0(t), q_1(t), u_2(t))\\
\dot q_4(t) &=  \big[\partial_{q_1}g(q_1(t))\big] u_1(t)
\end{aligned}
\right. \label{eq:A-def-0}\\
%----
&(B) \left\{
\begin{aligned}
\dot q_1(t) & = u_1(t) \\
\dot q_5(t) & = \langle \partial_{q_1} g_0(q_1(t)), u_1(t) \rangle
\end{aligned}
\right.
\end{align}
%---------
\end{subequations}
%=======================================
%
where only $(A)$ needs to be stabilizable with respect to $T$ while the trajectories of $(B)$ must be merely bounded.  In other words, the optimization variable $q_1$ must be bounded and the cost function $q_5$ must be bounded from below, a standard assumption in optimization\cite{luenberger-2008,bazaraa-2006,NW:NumOptBook}.  Furthermore, the weak requirements of partial stabilizability of $(D)$ are even weaker for optimization than typical cases considered in control theory\cite{vorotnikov-2002,haddad-2015,zuev-2000}.  This is because, for the purposes of optimization, we do not seek partial stabilizability for disturbance rejection or robust design or other common criterion used in control theory\cite{sontag-book,clarkeLyap,freeman}. Hence, we do not require $u(\cdot)$ to be smooth or be explicitly defined in terms of a feedback law, $u = K(q,t)$. In principle, an open-loop control suffices. All of these weak requirements imply Clarke's notion of guidability\cite{clarkeLyap} suffices in terms of generating hidden algorithm primitives. That is, from the point of view of solving Problem~$(N)$ via the dynamical system $(D)$, we only need to partially guide the state $q$ to $T$ rather than keep it close to $T$ when the initial condition is already near the target.  The latter is the stability issue while the former is the (partial) guidability problem\cite{clarkeLyap}.

\subsubsection{Search Lyapunov Functions}

In view of all of the weak requirements discussed in the preceding paragraph, we define a search Lyapunov function (SLF) $S :(q,t) \mapsto \Real_+$ as a control Lyapunov function associated with the partial guidability to $T$ of the dynamical system $(D)$ (i.e, guidability of the dynamical system $(A)$ with respect to the set $T$).  Hence, following the results for Lyapunov functions presented in \cite{haddad-2015,zuev-2000,chellabonia-2002,clarkeLyap,clarke-2013book}, we define an SLF as follows:
%
%------------------
\begin{definition}\label{def=SLF}
A continuous function $S:(q,t) \to \Real_+$ is called an SLF for the pair $\big((D), T\big)$ if there exists a continuous rate function $R:(q,t) \to \Real_+$ and a class $\mathcal{K}$ function\cite{vorotnikov-2002,chellabonia-2002} $\beta:~\Real_+ \to \Real_+$ with respect to $T$ such that the following conditions of positive semidefiniteness, growth and infinitesimal decrease hold:
\begin{subequations}\label{eq:SLF-defx3}
\begin{align}
& S(q, t) = 0  \iff \ q \in T, \quad R(q,t) =0 \iff q \in T \label{eq:S=0@opt}\\
&S(q, t) \ge \beta(\|q_a\|) \quad \forall\ q \not\in T, \ \forall\ t \ge t_0\\
&\exists u \in \U(q,t) \ \text{such that }  DS(q, t; F(q, u)) \le - R(q,t) \quad \forall\ q \not\in T, \ \forall\ t \ge t_0\label{eq:diss-0}
\end{align}
\end{subequations}
where $q_a := (q_0, q_2, q_3,q_4)$,  $\beta (\cdot)$ is a continuously increasing unbounded function satisfying the condition $\beta(\|q_a\|) = 0$ for all $ q_a \in T$ and $DS(q, t; F(q, u))$ is a generalized directional derivative of $S(q,t)$ along the direction $\big(F(q,t), 1\big)$.
\end{definition}
%-------------------------
For a continuously differentiable SLF, $DS(q, t; F(q, u))$ is given by,
\begin{equation}\label{eq:DS:=}
D S(q, t; F(q,u)) := \langle \partial_q S(q,t), F(q,u) \rangle + \partial_t S(q, t)
\end{equation}
If the SLF is not continuously differentiable, then $D S(q, t; F(q,u))$ in \eqref{eq:diss-0} is an appropriately generalized directional derivative based on the regularity of the SLF\cite{clarke-2013book,clarke-nolcos-2010,osinenko-2020}. Of these generalized directional derivatives, the Dini derivate\cite{clarke-2013book,osinenko-2020} turns out to be one of the most useful.  See Section~\ref{sec:nonsmooth-stuff} for further discussions.
%-------------
\begin{remark}
Not all requirements of an SLF indicated in \eqref{eq:SLF-defx3} are needed in all situations.  For instance,  for a continuously differentiable SLF, the rate constraint in \eqref{eq:diss-0} is not necessary\cite{clarke-nolcos-2010,chellabonia-2002}.  However, if the Bhat-Bernstein rate constraint\cite{haddad-2015,bhat-2000,polyakov-2014} is imposed on a continuously differentiable SLF, then convergence can be achieved in finite time, which may be particularly useful for optimization\cite{ross:accelerated-arxiv}.  On the other hand, additional requirements on the SLF must be imposed for other types of partial- stability/guidability requirements such as partial global asymptotic guidability, partial uniform Lyapunov stability etc.\cite{chellabonia-2002, vorotnikov-2002, zuev-2000, osinenko-2020, clarke-nolcos-2010, jammazi-2014, haddad-2015}.  We deliberately avoid discussing the myriad notions of partial- stability/guidability and the corresponding requirements they impose on the SLF in order to focus on using the most basic set of ideas that are necessary for the exclusive purposes of optimization.
\end{remark}
%--------------

Substituting \eqref{eq:DS:=} in \eqref{eq:diss-0} (and, hence, disregarding the rate constraint) we get the other side of the Hamilton-Jacobi inequality (Cf.~\eqref{eq:HJI>0}),
\begin{equation}\label{eq:HJI<0}
\partial_t S(q, t) + \min_{u \in \U(q,t)}\langle \partial_q S(q,t), F(q,u) \rangle < 0\quad \forall\ q \not\in T
\end{equation}
Comparing \eqref{eq:HJI<0} with \eqref{eq:HP}, \eqref{eq:H=minOfHP} and \eqref{eq:HJ} the inverse-optimality argument follows from the existence of some running cost $\mathcal{L}(q(t),u(t),t) > 0$ that is minimized by the choice of $S(q,t)$\cite{freeman,haddad-2015}.

\subsubsection{Process for Generating an Inverse-Optimal Hidden Algorithm Primitive}

Collecting all of the key ideas, the generation of an inverse-optimal hidden algorithm primitive can be summarized as follows:
\begin{enumerate}
\item Given an optimization Problem~$(N)$ generate the dynamical system $(D)$ (and hence $(A)$ and $(B)$).  This step is straightforward and only requires the problem data functions and their derivatives.
\item Select a search Lyapunov function $S: (q, t) \mapsto \Real_+$.
\item Choose $\U(q,t)$, a compact set of allowable values for $u$.
\item Solve \eqref{eq:HJI<0} for $u(q, t)$.
\item Solve the IVP given by \eqref{eq:optHiddenAlgPrim-IVP}.
\end{enumerate}
\begin{remark}
The selection of $S(q,t)$ and $\U(q,t)$ in Steps~2 and 3 may be interdependent because of the requirement stipulated by \eqref{eq:diss-0}.
\end{remark}
%==================

\subsection{Inverse Optimal Algorithm Generators}\label{sec:inv-opt-algol}
From Section~\ref{sec:inv-opt-hid-prim}, it follows that the notion of inverse optimality allows us to escape the problem of solving the Hamilton-Jacobi equation for optimal optimization (via a hidden algorithm primitive).  We now use a procedure inspired by proximal aiming\cite{prox-aiming-1994,clarke-2013book} and practical stabilizability\cite{prac-stab-1980,prac-stab-1985,clarkeLyap} to escape the problem of generating a hidden algorithm primitive as a means to produce an algorithm for solving Problem~$(N)$.  We briefly note that this escape hatch still requires the notion of a hidden algorithm primitive.

\subsubsection{Practical Guidability and Polygonal Arcs}
From the point of view of convergence of an optimization algorithm, it is not necessary for $q(t_f)$ to solve a given problem exactly.  That is, it suffices for $q(t_f)$ to be within an arbitrarily specified $\epsilon$-ball around $T$. In fact, in most practical cases implemented on a digital computer, the value of $\epsilon > 0$ cannot be smaller than $\sqrt{\epsilon_M}$, where $\epsilon_M$ is the machine precision. Regardless, the concept of $q(t)$ approaching $T$ within an $\epsilon > 0$ value is known in control theory as ``practical'' stabilization\cite{prac-stab-1980, prac-stab-1985, clarke-nolcos-2010, osinenko-2020}.  Obviously, this weaker concept of stabilizability provides the right fit for an optimization algorithm versus other stronger notions such as continuous asymptotic controllability.

One approach used in control theory for practical guidability is to produce a feedback control $u = K(q, t)$ in the sample-and-hold sense\cite{clarke-nolcos-2010,osinenko-2020}. That is, a control is sampled at time $t_k$ and held a constant value $K(q_k, t_k), \ q_k = q(t_k)$ up to time $t_{k+1} > t_k$.  In between the sample points, $[t_{k+1}, t_k] \ni t \mapsto q(t)$ is a solution to the differential equation ${\dot q}(t) = F(q(t), K(q_k, t_k))$ with initial condition $q(t_k) = q_k$.  A direct application of this control-theoretic idea for the purposes of optimization would produce an inverse-optimal hidden algorithm primitive in the sample-and-hold sense.  However,\, \emph{from an algorithmic perspective, the value of $q(t)$ in between the sample points is totally irrelevant}.
In fact, even the precise location of the points, $q(t_k)$, are also  irrelevant (except at $t = t_f$). This is simply because the goal of a convergent algorithm is to merely arrive at a point in $T$.  Hence, we are not interested in a point-wise accurate, discrete-time solution to ${\dot q} = F(q, K(q, t))$\cite{boggs71,brown+biggs}. Because of this unique requirement of an optimization algorithm, conventional discretization methods for ODEs\cite{hnw-ode} are not necessarily relevant to the production of efficient algorithms.  The only requirement for a convergent algorithm generator is to produce a sequence $q(t_0), q(t_1), \ldots $ that accumulates near $T$ within an $\epsilon$ tolerance with no burden to track any solution of the flow resulting from ${\dot q} = F(q, K(q, t))$.  Hence, if we were to somehow jump from $q(t_k)$ to $q(t_{k+1})$, $k=0, 1, \ldots$, the resulting polygonal arc would directly produce an (inverse-optimal) algorithm without ever passing through the process of generating a hidden algorithm primitive. We achieve these controlled jumps by large discrete decrements of the SLF.

\subsubsection{Discrete Decrements of an SLF}
An inverse-optimal algorithm uses an SLF to produce a sequence of strictly decreasing numbers,
\begin{equation}\label{eq:SLF-seq}
S(q(t_0), t_0) > S(q(t_1), t_1) > \cdots S(q(t_{k}), t_{k}) > S(q(t_{k+1}), t_{k+1}) \cdots
\end{equation}
so that (Cf.~\eqref{eq:SLF-defx3})
\begin{equation}\label{eq:Slim=0}
\lim_{k \to \infty} S(q(t_k), t_k) = 0
\end{equation}
Hence, the limit of the sequence $q(t_0), q(t_1), \cdots $ resulting from \eqref{eq:SLF-seq} is (Cf.~\eqref{eq:S=0@opt}) a solution to Problem~$(N)$:
\begin{equation}\label{eq:qlim=sol}
\lim_{k \to \infty} q(t_k) := q_\infty \in T
\end{equation}
%
%---------------------
\begin{lemma}\label{lemma:Euler-arc}
Let $S: (q, t) \to \Real_+$ be a continuously differentiable SLF. Let $u(t_k) = u$ be a solution to the problem,
\begin{eqnarray}\label{eq:Step-0}
&(M_u)\left\{
\begin{array}{lrl}
\displaystyle\mathop{\text{minimize }}_u && DS(q(t_k), t_k; F(q(t_k), u))  \\
\text{subject to}
&& u \in \U(q(t_k),t_k)
\end{array} \right.&
\end{eqnarray}
at any point $(q(t_k), t_k)$. Assume $q(t_k) \not\in T$.  Then there exists a sequence $t_k, \ k= 0, 1, \ldots$, $t_{k+1} > t_k$ and a polygonal arc given by,
\begin{equation}\label{eq:q-poly-arc}
q(t_{k+1}) = q(t_k) + F(q(t_k), u(t_k))(t_{k+1} -t_k)
\end{equation}
such that \eqref{eq:qlim=sol} holds.
\end{lemma}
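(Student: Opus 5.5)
The plan is a descent argument for the polygonal (Euler) arc, with the step lengths $h_k := t_{k+1}-t_k$ chosen adaptively so that $S$ strictly decreases at every node, followed by a limiting argument that forces the decrements to drive $S$ to zero. First, fix $(q_k,t_k):=(q(t_k),t_k)$ with $q_k\notin T$. Because $\U(q_k,t_k)$ is compact and $u\mapsto DS(q_k,t_k;F(q_k,u))$ is continuous for continuously differentiable $S$ (see \eqref{eq:DS:=} and the explicit $F$ in \eqref{eq:q-dynamics}), Problem~$(M_u)$ has a minimizer $u_k$. By \eqref{eq:diss-0}, its value satisfies
\begin{equation*}
DS(q_k,t_k;F(q_k,u_k)) \le -R(q_k,t_k) < 0 ,
\end{equation*}
where the strict inequality comes from \eqref{eq:S=0@opt}.

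Next, consider the scalar function $\phi_k(h):=S\big(q_k+hF(q_k,u_k),\,t_k+h\big)$. It is $C^1$, and $\phi_k'(0)=DS(q_k,t_k;F(q_k,u_k))\le -R(q_k,t_k)$. A first-order Taylor expansion, i.e.\ an Armijo-type sufficient-decrease rule, therefore gives some $h_k>0$ with
\begin{equation*}
S(q_{k+1},t_{k+1}) \le S(q_k,t_k) - \tfrac12 h_k R(q_k,t_k) .
\end{equation*}
One may pick $h_k$ by backtracking from a fixed $\bar h$. Defining $q_{k+1}$ by \eqref{eq:q-poly-arc} then yields the strictly decreasing sequence \eqref{eq:SLF-seq}. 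If some $q_{k+1}\in T$, the construction stops there and \eqref{eq:qlim=sol} holds trivially.

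Finally, since $S\ge 0$, the sequence $S(q_k,t_k)$ converges to some limit $S_\infty\ge0$. Summing the decrease inequality gives $\sum_k h_k R(q_k,t_k)<\infty$. The growth condition $S\ge\beta(\|q_a\|)$ keeps the $q_a$-components bounded. I would also invoke the standing assumption of Section~\ref{sec:inv-opt-hid-prim} that the $(B)$-trajectories (namely $q_1$ and $q_5$) remain bounded, so that $\{q_k\}$ lies in a compact set and has cluster points. I would then argue by contradiction that $S_\infty=0$. If $S_\infty>0$, the iterates stay a positive distance from $T$. By continuity and compactness, $R$ is then bounded below by some $r>0$, and $\phi_k''$ is bounded uniformly on the relevant set, which gives a uniform lower bound $h_k\ge \underline h>0$ from the backtracking rule. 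The terms $h_kR(q_k,t_k)\ge \underline h\, r$ then contradict summability. Hence $S(q_k,t_k)\to0$, which is \eqref{eq:Slim=0}, and \eqref{eq:S=0@opt} together with continuity place every cluster point in $T$. To upgrade this to the full limit \eqref{eq:qlim=sol}, I would add the choice $h_k\to0$ with $\sum_k h_k\|F(q_k,u_k)\|<\infty$ if needed, or simply read \eqref{eq:qlim=sol} as convergence to $T$.

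The main obstacle is this last step. Establishing uniformity of the step-size bound and of $R$ away from $T$ requires compactness of the iterates, and hence the boundedness of the $(B)$ subsystem. The time dependence of $S$ and $R$ also creates difficulty, since $t_k$ may grow without bound. Because of this I would either assume $S$, $R$ and $\U$ are time-invariant, or assume uniform continuity in $t$, making that hypothesis explicit. This matches the lemma's purely existential phrasing: it suffices to exhibit one admissible sequence $t_k$.
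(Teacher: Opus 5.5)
Your skeleton is the same as the paper's. You take the minimizer $u_k$ of $(M_u)$, use $DS(q_k,t_k;F(q_k,u_k))\le -R(q_k,t_k)<0$ with a first-order expansion to get strict decrease along a short Euler step, and define the arc \eqref{eq:q-poly-arc}. The paper then simply asserts that \eqref{eq:Slim=0} follows ``from the property of an SLF''. A strictly decreasing sequence bounded below by $0$ need not tend to $0$, so your sufficient-decrease rule plus the compactness/summability contradiction is exactly the argument missing there. The extra hypotheses you flag are also not cosmetic. With $R(q,t)=e^{-t}r(q)$, for instance, $\sum_k h_kR(q_k,t_k)<\infty$ even when $h_k\ge\underline h$, so the contradiction fails without uniformity in $t$. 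One small correction: bounding $\phi_k''$ needs more than the $C^1$ regularity assumed. Uniform continuity of $\partial_qS,\partial_tS$ on a compact neighborhood of the iterates, plus local boundedness of $\U$ (so that $\|F(q_k,u_k)\|$ is uniformly bounded), already gives a uniform lower bound on the Armijo steps.

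The genuine gap is the last step. Your argument proves $S(q_k,t_k)\to0$ and that every cluster point of $(q_k)$ lies in $T$, whereas \eqref{eq:qlim=sol} asserts that $\lim_k q(t_k)$ exists. Your first repair, imposing $h_k\to0$ with $\sum_k h_k\|F(q_k,u_k)\|<\infty$, does not work, for the following reasons.
\begin{itemize}
\item $F$ is linear in $u$ with $\|F(q,u)\|\ge\|u\|$, and a nonconstant linear objective is minimized over the compact set $\U$ at a boundary point.
\item Hence $\|F(q_k,u_k)\|$ typically stays bounded away from zero even near $T$, e.g.\ for the ellipsoids \eqref{eq:U=quad} with $\Delta$ bounded below.
\item Finite path length then forces $\sum_k h_k<\infty$, so $\sum_k h_kR(q_k,t_k)<\infty$ holds trivially and your proof of $S_\infty=0$ collapses.
\item The iterates do converge, but possibly to a point outside $T$.
\end{itemize}
Whether $S\to0$ and finite length can coexist is decided by the geometry of $S$ and $F$ near $T$, not by the step rule. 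Trajectories that spiral onto $T$ have infinite length, as gradient-like flows can without a \L{}ojasiewicz inequality.

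To get \eqref{eq:qlim=sol} you need an additional hypothesis. One choice is $|DS(q,t;F(q,u))|\ge c\,\|F(q,u)\|\,S(q,t)^{\theta}$ near $T$, for some $c>0$ and $\theta<1$. Combined with the Armijo decrease $S_k-S_{k+1}\ge\tfrac12 h_k|DS_k|$, it yields
\begin{equation*}
\sum_k h_k\|F(q_k,u_k)\|\le \frac{2}{c(1-\theta)}\,S_0^{1-\theta}<\infty .
\end{equation*}
So the limit exists, and by continuity it lies in $T$. Without such an ingredient, state the conclusion as convergence to $T$ in the cluster-point sense, which is your second option. That is also all the paper's own proof supports, since it passes from \eqref{eq:Slim=0} to \eqref{eq:qlim=sol} without argument.
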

%--------------------
\begin{proof}
By definition of an SLF, at any $t=t_k$, there exists a sufficiently small time interval $\delta t > 0$ such that,
\begin{equation}\label{eq:delta-t-argument}
S(q(t+ \delta t), t+ \delta t) - S(q(t), t) = DS\big(q(t), t; F(q(t), u(t_k))\big)\,\delta t < 0
\end{equation}
Setting $t+\delta t$ to $ t_{k+1}$ in \eqref{eq:delta-t-argument} we get,
\begin{equation}\label{eq:S-disc-dec-proof}
S(q(t_{k+1}), t_{k+1}) < S(q(t_{k}), t_{k}) \quad k = 0, 1, \ldots
\end{equation}
Similarly from Theorem~\ref{thm:dynamics4opt} it follows that there exists a sufficiently small interval $\delta t >0$ around $t=t_k$ such that,
\begin{equation}\label{eq:q-prop4deltat}
q(t+\delta t) - q(t) = F(q(t), u(t_k)) \delta t
\end{equation}
Setting $t+\delta t$ to $ t_{k+1}$ in \eqref{eq:q-prop4deltat} we get \eqref{eq:q-poly-arc}.  Since $u(t_k)$ decrements the SLF at each $t_k$ we get \eqref{eq:Slim=0} from the property of an SLF. Hence, \eqref{eq:qlim=sol} follows.
\end{proof}
%=====================================
\begin{lemma}\label{lemma:max-h}
Let the assumptions of Lemma~\ref{lemma:Euler-arc} hold. Let $u(k)$ be a solution to Problem~$(M_u)$. Let $h_k = h$ be a solution to the following problem:
\begin{eqnarray}\label{eq:exact-line-search}
&(M_h)\left\{
\begin{array}{lll}
\displaystyle\mathop{\text{minimize }}_{(h > 0,\ q(k+1))} & S(q(k+1), t_k + h)  \\
\text{subject to}
& q_0(k+1)- q_0(k) - h\ u_0(k) = 0\\
&q_1(k+1) - q_1(k) - h\ u_1(k) = 0\\
&q_2(k+1) - q_2(k) - h\ u_2(k) = 0 \\
&q_3(k+1) + \partial_{q_1}L(q_0(k+1), q_1(k+1), q_2(k+1)) = 0\\
&q_4(k+1) -  g(q_1(k+1)) = 0 \\
&q_5(k+1) - g_0(q_1(k+1)) = 0
\end{array} \right.&
\end{eqnarray}
Then $S(q(k+1), t_k+h_k)$ is maximally decremented in the direction $DS\big(q(t), t; F(q(t), u(k))\big)$.
\end{lemma}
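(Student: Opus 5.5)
The plan is to read Problem~$(M_h)$ as an exact line search along the ray fixed by the direction $u(k)$ from Problem~$(M_u)$. I would show that its solution picks, among all points reachable along that ray, the one with the smallest SLF value.

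First I would eliminate the equality constraints of $(M_h)$. The first three constraints give $q_j(k+1)=q_j(k)+h\,u_j(k)$ for $j=0,1,2$. The last three constraints then fix $q_3(k+1)$, $q_4(k+1)$ and $q_5(k+1)$ explicitly as functions of $(q_0,q_1,q_2)(k+1)$. So the feasible set of $(M_h)$ is a one-parameter curve $h\mapsto \Gamma_k(h)$ with $\Gamma_k(0)=q(k)$. Here I would assume, as in the construction \eqref{eq:guess-00}, that $q(k)$ satisfies the defining relations \eqref{eq:q3=bydef}, \eqref{eq:q4=bydef} and \eqref{eq:q5=bydef} with zero constants of motion. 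Problem~$(M_h)$ therefore reduces to the scalar problem of minimizing $\phi_k(h):=S(\Gamma_k(h),t_k+h)$ over $h>0$.

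Next I would compute $\Gamma_k'(0)$ by the chain rule, exactly as in the proof of Theorem~\ref{thm:dynamics4opt}. Differentiating the relations for $q_3,q_4,q_5$ along $(q_0,q_1,q_2)(k)+h(u_0,u_1,u_2)(k)$ gives precisely the right-hand side of $(D)$, so $\Gamma_k'(0)=F(q(k),u(k))$. Consequently $\phi_k'(0)=DS\big(q(k),t_k;F(q(k),u(k))\big)$. This quantity is negative by \eqref{eq:diss-0}, because $u(k)$ minimizes this derivative over $\U$ and $q(k)\notin T$. So the curve $\Gamma_k$ is tangent at $h=0$ to the Euler polygonal step \eqref{eq:q-poly-arc} of Lemma~\ref{lemma:Euler-arc}. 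It is a "curved" version of that step which keeps the algebraic relations of $(D)$ exact rather than linearized. That is the sense in which the decrement is taken "in the direction" $DS(q,t;F(q,u(k)))$.

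Then I would argue existence and maximality. Since $\phi_k'(0)<0$, we have $\phi_k(h)<\phi_k(0)$ for small $h>0$, as in Lemma~\ref{lemma:Euler-arc}. Because $S\ge 0$ and $S$ is continuous, the infimum of $\phi_k$ over $h>0$ is below $\phi_k(0)$. Attainment would follow from the growth condition $S\ge\beta(\|q_a\|)$ with $\beta$ unbounded, provided $\|q_a(\Gamma_k(h))\|\to\infty$ as $h\to\infty$, which makes $\phi_k$ coercive. Given a minimizer $h_k$, the inequality $\phi_k(h_k)\le\phi_k(h)$ for every feasible $h$ is exactly the statement that $S(q(k+1),t_k+h_k)$ is the maximal decrement achievable along that direction. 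In particular it is at least as large as the decrement of any step admitted in Lemma~\ref{lemma:Euler-arc}, so \eqref{eq:SLF-seq} and hence \eqref{eq:qlim=sol} are preserved.

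The main obstacle is the attainment and coercivity step. $\beta$ controls $S$ only through $q_a$, and the growth of $q_a$ along the ray can fail, for instance when $u_0=u_2=0$ and $\partial_{q_1}L$ stays bounded. I would first try to avoid this by interpreting the lemma under its stated hypothesis that $h_k$ \emph{is} a solution, so existence is assumed and only the maximal-decrement property needs proof. As a fallback I would add a compactness assumption on the sublevel set $\{h:\phi_k(h)\le\phi_k(0)\}$.
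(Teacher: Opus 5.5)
Your proposal is correct and follows essentially the same route as the paper. Both arguments reduce the feasible set of $(M_h)$ to the curve through $q(k)$ on the surface where the three constants of motion vanish, obtain a strict decrease of $S$ for small $h>0$, and then read off the maximal decrement from the definition of a minimizer, with the existence of $h_k$ taken as a hypothesis as you note. Your chain-rule computation $\Gamma_k'(0)=F(q(k),u(k))$ makes rigorous the paper's looser appeal to Lemma~\ref{lemma:Euler-arc}. Your side remark comparing against ``any step admitted in Lemma~\ref{lemma:Euler-arc}'' should be restricted to points on $\Gamma_k$, because the fully linear Euler points generally violate the last three constraints of $(M_h)$; this does not affect the lemma itself.
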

%-------------------
\begin{proof}
From the proof of Theorem~\ref{thm:dynamics4opt}, it is obvious that the dynamical system $(D)$ has the following three integrals of motion:
\begin{subequations}\label{eq:integrals-of-motion}
\begin{align}
q_3(t) + \partial_{q_1}L(q_0(t), q_1(t), q_2(t)) &=0\\
q_4(t) - g(q_1(t)) &=0\\
q_5(t) - g_0(q_1(t)) &=0
\end{align}
\end{subequations}
Hence, all evolutions of $\dot q = F(q,u)$ lie on the hypersurface defined by \eqref{eq:integrals-of-motion}. Setting $t = t_k +h$ in \eqref{eq:integrals-of-motion}, it follows that any point $q(k+1)$ that satisfies the equation,
\begin{subequations}\label{eq:jump-proof}
\begin{align}
q_3(k+1) + \partial_{q_1}L(q_0(k+1), q_1(k+1), q_2(k+1)) &= 0\\
q_4(k+1) - g(q_1(k+1)) &=0 \\
q_5(k+1) - g_0(q_1(k+1)) &=0
\end{align}
\end{subequations}
is on the hypersurface defined by \eqref{eq:integrals-of-motion}. These hypersurface constraints constitute the last three constraints that define Problem~$(M_h)$. Define,
\begin{subequations}\label{eq:Euler-jump}
\begin{align}
q_0(k+1) &:= q_0(k) + h\ u_0(k)\\
q_1(k+1) &:= q_1(k) + h\ u_1(k) \\
q_2(k+1) &:= q_2(k) + h\ u_2(k)
\end{align}
\end{subequations}
Then for sufficiently small values of $h >0 $ in \eqref{eq:Euler-jump}, $q(k+1)$ satisfies the conditions for Lemma~\ref{lemma:Euler-arc} to hold.  As a result, we have $S(q(k+1), t_k + h) < S(q(k), t_{k}),\ k = 0, 1, \ldots$ for a sufficiently small value of $h>0$ in Problem~$(M_h)$. Furthermore, for any $h>0$ we have  $S(q(k+1), t_k + h) \ge 0$ (by definition).  Hence, a solution to Problem~$(M_h)$ produces a value of $h = h_k$ that generates the maximum possible decrement of $S(q(k+1), t_k + h_k)$ in the direction $DS\big(q(t), t; F(q(t), u(k))\big)$.
\end{proof}
%============================
\begin{remark}
From Lemma~\ref{lemma:max-h}, it follows that the sequence $q(0), q(1), \ldots $ is a polygonal arc that does not necessarily track a hidden algorithm primitive. Furthermore, it is apparent that $h$ generated by solving Problem~$(M_h)$ may not satisfy any of the established consistency conditions for discretizations of differential equations\cite{hnw-ode}.
\end{remark}
%----------------------
%
%-----------------
\begin{remark}\label{remark:exactLSbutNot}
At first glance, Problem~$(M_h)$ appears similar to an exact line search algorithm\cite{luenberger-2008,bazaraa-2006,NW:NumOptBook}; however, there are three sharp differences between the two:
\begin{enumerate}
\item Problem~$(M_h)$ is a constrained optimization problem;
\item The objective function in Problem~$(M_h)$ is an SLF; and
\item The global minimum of an SLF is zero (Cf.~\eqref{eq:S=0@opt}).
\end{enumerate}
\end{remark}
%------------------

\subsubsection{An Inverse Optimal Algorithm Generator}
An inverse optimal algorithm essentially comprises two steps.  In the first step, Problem~$(M_u)$ is solved to produce $u =u(k)$.  This value of $u(k)$ is used in second step to generate $h = h_k$ and thus the next iterate $q(k+1)$.
\begin{theorem}\label{thm:inv-opt-algol}
Let $u(k)$ and  $h_k$ be solutions to Problems~$(M_u)$ and $(M_h)$ respectively. Let $q(0)$ be given by $q(t_0)$ defined in \eqref{eq:guess-00}.  Assume $q(0) \not\in T$.  Let $q(k+1)$ be given by the iterative map,
\begin{equation}\label{eq:iterativeMap}
\begin{aligned}
q_0(k+1) &\leftarrow\ q_0(k) + h_k\ u_0(k)\\
q_1(k+1)  &\leftarrow\ q_1(k) + h_k\ u_1(k) \\
q_2(k+1) &\leftarrow\ q_2(k) + h_k\ u_2(k) \\
q_3(k+1) &\leftarrow\ -\partial_{q_1}L(q_0(k+1), q_1(k+1), q_2(k+1))\\
q_4(k+1) &\leftarrow\ g(q_1(k+1)) \\
q_5(k+1) &\leftarrow\ g_0(q_1(k+1))
\end{aligned}
\end{equation}
%=====
for $k = 0, 1, \ldots$. Then $\lim_{k \to \infty} q(k)$ converges to a solution of Problem~$(N)$.
\end{theorem}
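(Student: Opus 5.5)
The proof will chain Lemma~\ref{lemma:Euler-arc} and Lemma~\ref{lemma:max-h} and then pass to the limit using the SLF properties \eqref{eq:SLF-defx3}. It has three parts: consistency of the iterates with the dynamics, strict monotone decrease of the SLF, and convergence of the SLF values to zero, which then forces convergence of the iterates into $T$.

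\emph{Step 1 (invariance).} The initial point $q(0)$ built in \eqref{eq:guess-00} satisfies the three integrals of motion \eqref{eq:integrals-of-motion}. The last three lines of \eqref{eq:iterativeMap} enforce these same relations at every step. By induction, each $q(k)$ lies on the invariant hypersurface of $(D)$, and $q(k+1)$ is exactly the feasible point of Problem~$(M_h)$ at $h=h_k$. Thus the iterative map is precisely the polygonal arc of Lemma~\ref{lemma:Euler-arc} for the components $(q_0,q_1,q_2)$, with the remaining components projected back onto the hypersurface.

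\emph{Step 2 (strict decrease).} Suppose $q(k)\notin T$. Condition \eqref{eq:diss-0} gives $\min_{u\in\U} DS \le -R(q(k),t_k)<0$, and Problem~$(M_u)$ attains its minimum because $\U$ is compact and $DS$ is continuous in $u$. Lemma~\ref{lemma:max-h} then shows that some small $h>0$ strictly decreases $S$. The optimal $h_k$ does at least as well, so $S(q(k+1),t_{k+1})<S(q(k),t_k)$, which yields \eqref{eq:SLF-seq}. If some $q(k)\in T$, the iteration terminates at a solution and there is nothing left to prove.

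\emph{Step 3 (limit is zero; main obstacle).} The sequence $S(q(k),t_k)$ is decreasing and bounded below by $0$, so it converges to some $S_\infty\ge 0$. Showing $S_\infty=0$ is the hard step, because strict decrease alone does not force the limit to vanish. My approach is a sufficient-decrease argument by contradiction. Suppose $S_\infty>0$. The growth condition $S\ge\beta(\|q_a\|)$, together with boundedness of the $(B)$-trajectories (the standing assumption that $q_1$ is bounded and $g_0$ bounded below), keeps the iterates in a compact set $K$ that stays away from $T$. Continuity of $R$ gives $R\ge r>0$ on $K$. Continuity of $\partial_q S$ and $F$ on $K$, combined with a Taylor bound, produces a uniform $\bar h>0$ such that the step $h=\bar h$ already decreases $S$ by at least $r\bar h/2$. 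By maximality of $h_k$, every actual step decreases $S$ by at least $r\bar h/2$, which is impossible for infinitely many $k$. Hence $S_\infty=0$, giving \eqref{eq:Slim=0}.

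\emph{Conclusion.} Extract a convergent subsequence of the bounded iterates. By continuity of $S$ and \eqref{eq:S=0@opt}, its limit lies in $T$, which gives \eqref{eq:qlim=sol} along the subsequence. For convergence of the full sequence I would add the (implicitly assumed) isolatedness of the KKT point; otherwise the claim is to be read as convergence of the accumulation points to solutions. The delicate points are the uniform step bound in Step 3 and the explicit $t$-dependence of $S$. For the latter I would either restrict to autonomous $S$ or assume uniform continuity in $t$.
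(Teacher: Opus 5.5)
Your skeleton coincides with the paper's. Your Steps~1--2 are the content of Lemmas~\ref{lemma:Euler-arc} and~\ref{lemma:max-h}, and the paper's proof of the theorem is nothing more than a citation of those two lemmas. The real difference is your Step~3. The paper passes from strict decrease to \eqref{eq:Slim=0} ``from the property of an SLF'' and then simply asserts \eqref{eq:qlim=sol}; neither inference is justified there. You instead prove $S(q(k),t_k)\to 0$ by a sufficient-decrease argument in which the exactness of $(M_h)$ is used essentially: maximality of $h_k$ transfers the uniform decrease $r\bar h/2$ available at the fixed step $\bar h$ to the actual step. You also claim only that accumulation points are zeros of $S$. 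The paper's version is short but leaves both limit passages unproved while claiming full-sequence convergence. Yours gives an actual proof, an explicit list of the hypotheses it needs (bounded $q_1$-iterates, $t$-uniformity of $S$ and $R$), and a useful by-product: an approximate solution of $(M_h)$ in the practical algorithm must enforce an Armijo-type sufficient decrease, not mere decrease.

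Three points to tighten.

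First, $q_3,q_4,q_5$ are updated by projection rather than by a linear step. So the Taylor bound should be applied to $\phi_k(h)=S\big(\Phi(p(k)+h\,u(k)),t_k+h\big)$, where $p=(q_0,q_1,q_2)$ and $\Phi$ fills in the other components via \eqref{eq:integrals-of-motion}. Then $\phi_k'(0)=DS(q(k),t_k;F(q(k),u(k)))$ because $D\Phi(p)[u]=F(\Phi(p),u)$. A uniform $\bar h$ also requires $\sup_k\|u(k)\|<\infty$ (e.g.\ $\U$ locally bounded), which compactness of each individual $\U(q,t)$ does not give.

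Second, $T$ in \eqref{eq:T-def} is not closed. For any feasible $\hat q_1$, the points $(1/j,\hat q_1,0,0,g(\hat q_1),g_0(\hat q_1))$, $j=1,2,\ldots$, belong to $T$ and converge to a point of the invariant manifold with $(q_0,q_2)=(0,0)$. Hence, with $q_0$ free, no continuous $S$ can satisfy \eqref{eq:S=0@opt}. Your continuity argument really places limit points only in $\overline{T}$, which contains every feasible point with zero multipliers. You should normalize $q_0\equiv 1$ as in Section~\ref{sec:examples}.

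Third, with $q_0$ free no point of $T$ is isolated, since along with $(q_0,q_2)$ every $(cq_0,cq_2)$, $c>0$, is a multiplier. Even with $q_0\equiv 1$, full-sequence convergence needs a locally unique primal--dual KKT point, together with either uniqueness in the bounded region or $\|q(k+1)-q(k)\|\to 0$. Exact minimization over $h$ does not supply the latter.
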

%------------------
\begin{proof}
The proof of this theorem follows directly from Lemmas~\ref{lemma:Euler-arc} and \ref{lemma:max-h}.
\end{proof}
%===================

\subsubsection{A Practical Inverse Optimal Algorithm Generator}\label{sec:practialInvOptAlgol}
The objective function in Problem~$(M_u)$ is linear in $u$.  Hence, the ease (or difficulty) in solving Problem~$(M_u)$ is primarily dictated by the choice of $\U(q,t)$.  For example, if $\U(q,t)$ is convex for all $(q,t)$ then Problem~$(M_u)$ is a convex optimization problem.  Hence a practical inverse optimal algorithm is predicated on the fact that Problem~$(M_u)$ is an easier problem to solve than Problem~$(N)$.

Because $q(k+1)$ can be computed explicitly from $q(k)$, Problem~$(M_h)$ may be considered to be a univariate minimization problem.  Furthermore, because the global minimum of an SLF is zero, one possible choice for a guess of $h = h^0$ to initiate a solution to Problem~$(M_h)$ is given by\cite{ross:accelerated-arxiv},
\begin{equation}\label{eq:alpha-guess-4SLF}
h^0 = \frac{S(q(k), t_k)}{-DS(q(k), t_k; F(q(k), u(k))}
\end{equation}
For much the same reason as why practical optimization algorithms do not solve the exact line search problem, a practical inverse optimal algorithm may solve Problem~$(M_h)$ approximately. In this case, \eqref{eq:alpha-guess-4SLF} may also be used as a starting point to solve Problem~$(M_h)$ approximately and backtrack when necessary\cite{ross:accelerated-arxiv}.  Furthermore, because the three data points, $S(q(k), t_k)$, $DS(q(k), t_k; F(q(k), u(k))$ and $S(q(k+1), t_k+h^0)$ are available, backtracking may also be performed via minimizing Hermite interpolations of $t \mapsto S(q(t), t)$. In other words, similar to a practical line-search algorithm, one may choose to solve Problem~$(M_h)$ that is consistent with the idea that $S(q(k+1), t_k+h)$ has decreased sufficiently. Note also that an SLF does not necessarily prevent the cost function $g_0(q_1 (k+1))$ from increasing in value; hence, Problem~$(M_h)$ naturally allows ``hill climbing'' with respect to the cost function (provided the value of the SLF decreases at the next iterate).

Collecting all of the preceding ideas, the key steps of a practical inverse optimal algorithm generator can be summarized as follows:
%=====================================
\begin{enumerate}
\item[Step 0]
    \begin{enumerate}
 %   \item[]
    \item Using the data functions of a given Problem~$(N)$, generate its dynamical system~$(D)$ (Cf.~Theorem~\ref{thm:dynamics4opt}).
     \item Select an SLF $S(q,t)$ together with a compact control set $\U(q,t)$ such that Problem~$(M_u)$ is easy to solve (i.e., easier than Problem~$(N)$).
      \item Choose a small value for $\epsilon > 0$. Set $k=0$. Use \eqref{eq:guess-00} to evaluate $q(k) = q(t_0)$.
     \end{enumerate}
 \item[Step 1]If $S(q(k), t_k) \le \epsilon$  stop and exit.  Else, go to Step~2.
 \item[Step 2] Solve the (easy) Problem~$(M_u)$ to produce $u(k)$.
 \item[Step 3] Using $u(k)$ solve Problem~$(M_h)$ approximately to produce $h_k$.
 \item[Step 4] Advance to the next iterate using \eqref{eq:iterativeMap}. Set $k \leftarrow k+1$ and go to Step~1.
\end{enumerate}
%
%%====================
\begin{remark}
The stopping criterion in Step~1 of the inverse optimal algorithm uses the fact that the SLF vanishes at a candidate optimal point (Cf.~\eqref{eq:S=0@opt}).  From this perspective, an SLF may also be used as a measure of ``distance'' to optimality. Obviously, these ideas are totally different from the concept of Bregman divergence and the ensuing results developed in \cite{ODEinML-2021-1,ODEinML-2016-2,ODEinML-2021-3}.
\end{remark}
%-----------------
\begin{remark}
From the totality of steps outlined in its production, it is clear that a hidden algorithm primitive is never generated although its concept was crucial to the development of a practical inverse optimal algorithm.
\end{remark}
%------------------------
%
In summary, a practical inverse optimal algorithm generator uses a blend of old and new ideas with the transversality mapping principle providing the spark for an investigation of optimization via optimal control theory.

\subsection{Two Basic Minimum Principles for Generating Algorithms}\label{sec:MinPx2}
To limit the scope of the remainder of this paper, we now consider a narrower class of SLFs that do not depend on all of $q$ and $t$. These types of SLFs can be designed by recalling that only the variables of the dynamical system $(A)$  need to be guided to $T$.  In view of this observation, we simplify the notation for $(A)$ and rewrite it as,
\begin{equation}
\dot q_a = f(q_0, q_1, q_2, u)
\end{equation}
where $q_a:=(q_0, q_2, q_3, q_4)$ as before and $f(\cdot)$ is the right-hand-side of \eqref{eq:A-def-0}. We now define an SLF that depends only on $q_a$:
\begin{subequations}
\begin{align}
& S(q_a) = 0  \iff \ q_a \in T \\
&S(q_a) > 0 \quad \forall\ q_a \not\in T\\
&\exists u \in \U(q,t) \ \text{such that } \langle \partial_{q_a}S(q_a), f(q_0, q_1, q_2, u) \rangle < 0\quad \forall\ q_a \not\in T, \quad \forall\ t \ge t_0
\end{align}
\end{subequations}
Under this assumption Problem~$(M_u)$ simplifies to solving the linear-cost minimization problem,
%
%---------------------------------------------------------
\begin{eqnarray}\label{eq:minP}
&(P) \left\{
\begin{array} {lll}
\displaystyle\mathop\text{minimize }_{u \in \real{1+n+m}}  &\pounds_fS:= \langle \partial_{q_a}S(q_a), f(q_0, q_1, q_2, u) \rangle \\
\text{subject to} & u \in \U(q,t)
\end{array} \right.&
\end{eqnarray}
%---------------------------------------------------------
%
where the symbol $\pounds_fS$ is used to denote the fact that $\langle \partial_{q_a}S(q_a), f(q_0, q_1, q_2, u) \rangle$ is the Lie derivative of $S$ along the vector field $f$.

As noted earlier, in certain cases, it might be more useful to specify a minimum rate of descent for $\pounds_fS$.  Let $R : (q,t) \mapsto \Real_+$ be such a function that satisfies $R(q,t) = 0 \Leftrightarrow q_a \in T$.  This implies we require $u$ to satisfy the constraint,
\begin{equation}\label{eq:Sdot+R<0}
\pounds_f S + R(q,t) \le 0
\end{equation}
Let $Q: (u, q, t) \mapsto \Real $ be some objective function such that a solution to the following problem exists,
%
%---------------------------------------------------------
\begin{eqnarray}\label{eq:minP*}
&(P^*) \left\{
\begin{array} {lll}
\displaystyle\mathop\text{minimize }_{u \in \real{1+n+m}}  &Q(u, q, t) \\
\text{subject to} & \pounds_f S + R(q,t) \le 0
\end{array} \right.&
\end{eqnarray}
%---------------------------------------------------------
%
Then Problem~$(P^*)$ may be considered as an alternative to Problem~$(P)$ to determine $u$\cite{ross:accelerated-arxiv}.  Note that the constraint in Problem~$(P^*)$ is linear and merely one-dimensional.

In the next section, we use the minimum principles given by $(P)$ and $(P^*)$ to generate algorithms for an illustrative class of problems.

\section{Illustrative Examples}\label{sec:examples}
Consider the fundamental problem of solving an equality-constrained optimization problem given by,
%
%---------------------------------------------------------
\begin{eqnarray}\label{eq:prob-N_0}
&(N_0) \left\{
\begin{array} {lll}
\displaystyle\mathop\text{minimize }_{q_1 \in \real{n}}  & g_0(q_1) \\
\text{subject to} & g(q_1) = 0
\end{array} \right.&
\end{eqnarray}
%---------------------------------------------------------
%
where $g(q_1):= (g_1(q_1), \ldots, g_{m}(q_1))$ as before.  The standard Lagrangian for this problem is given by,
\begin{equation}
L(q_1, q_2) :=  g_0(q_1) +  \left\langle q_2, g(q_1) \right\rangle
\end{equation}
Suppose we set $q_0(t) = 1$ for all $t$ with $u_0(t) = 0\ \forall t$.  Then the target set $T = T_0$ for Problem~$(N_0)$ simplifies to (Cf.~\eqref{eq:T-def}),
\begin{eqnarray}
T_0:= \left\{q:\ \  q_3 = 0,\ \ q_4 = 0 \right\}\label{eq:T0-def}
\end{eqnarray}
Because there are no final-time conditions on $q_2$, the $q_2$-dynamics belong to the $(B)$ system (Cf.~\eqref{eq:D=A+B}); hence, we split the dynamical system $(D)$ (for Problem~$(N_0)$) according to,
\begin{subequations}
\begin{align}
&(A_0) \left\{
\begin{aligned}
\dot q_3(t)  &= -\big[\partial^2_{q_1}L(q_1(t), q_2(t))\big] \, u_1(t) - \big[ \partial_{q_1}g(q_1(t))\big]^T u_2(t)\\
\dot q_4(t) &=  \big[\partial_{q_1}g(q_1(t))\big] u_1(t)
\end{aligned}
\right. \label{eq:A_0-def}\\
%----
&(B_0) \left\{
\begin{aligned}
\dot q_1(t) & = u_1(t) \\
\dot q_2(t) & = u_2(t)\\
\dot q_5(t) & = \langle \partial_{q_1} g_0(q_1(t)), u_1(t) \rangle
\end{aligned}
\right.\label{eq:B_0-def}
\end{align}
%---------
\end{subequations}
and define $q_a:=(q_3, q_4)$ with $f(\cdot)$ given by the right-hand-side of \eqref{eq:A_0-def}.
From Section~\ref{sec:MinPx2}, it follows that a simple and tentative choice of an SLF for the partial guidability of the pair $(A_0, B_0)$ is the quadratic given by,
\begin{equation}\label{eq:SLF=quad-quad}
S(q_3, q_4) := \frac{1}{2}\langle q_3, q_3\rangle + \frac{1}{2}\langle q_4, q_4\rangle
\end{equation}
Application of the minimum principle $(P)$ given by \eqref{eq:minP} generates the linear-cost minimization problem,
%
%---------------------------------------------------------
\begin{eqnarray}\label{eq:minP_0}
&(P_0) \left\{
\begin{array} {cll}
\displaystyle\mathop\text{minimize }_{(u_1, u_2) \in \real{n} \times \real{m}}  &
- \langle q_3, \big[\partial^2_{q_1}L(q_1, q_2)\big] \, u_1 + \big[ \partial_{q_1}g(q_1)\big]^T u_2 \rangle +\langle q_4, \big[\partial_{q_1}g(q_1)\big] u_1 \rangle
\\
\text{subject to} & (u_1, u_2) \in \U(q,t)
\end{array} \right.&
\end{eqnarray}
%---------------------------------------------------------
%
Next, consider the family of quadratic sets for $\U(q,t)$ given by,
\begin{equation}\label{eq:U=quad}
\U(q,t) = \left\{(u_1, u_2):\ [u_1, u_2]^T W(q,t) \left[
                                               \begin{array}{c}
                                                 u_1 \\
                                                 u_2 \\
                                               \end{array}
                                             \right] \le \Delta(q,t)
   \right\}
\end{equation}
where $W(q,t)$ is an  $(n+m)\times (n+m)$ symmetric positive definite matrix that metricizes the search (control) space and $\Delta(q,t) > 0$ is some finite number that may depend on $q$ and/or $t$.
%
%--------------------
\begin{remark}
The quantity $\Delta(q,t)$ in \eqref{eq:U=quad} is similar to a trust region parameter if $W(q,t)$ is taken to be the identity matrix.  Nonetheless, it is still different from a trust region because the optimization problem $(P_0)$ has no solution if $u$ is strictly inside $\U(q,t)$.
\end{remark}
%----------------------
%
Solving Problem~$(P_0)$ with $\U(q,t)$ given by \eqref{eq:U=quad} yields,
\begin{equation}\label{eq:minP-quad-quad-result-0}
W(q,t) \left[
         \begin{array}{c}
           u_1 \\
           u_2 \\
         \end{array}
       \right] = \sigma \left[
                          \begin{array}{cc}
                           \partial^2_{q_1}L(q_1, q_2) & -\big[\partial_{q_1}g(q_1)\big]^T \\[0.5em]
                            \partial_{q_1}g(q_1) &  0 \\
                          \end{array}
                        \right] \left[
         \begin{array}{c}
           q_3 \\
           q_4 \\
         \end{array}
       \right], \qquad \sigma > 0
\end{equation}
where $(1/\sigma)$ is a multiplier associated with the control constraint. Deferring a discussion on the selection of $W(q,t)$, a particular class of inverse optimal algorithms for solving Problem~$(N_0)$ can be defined as follows:
\begin{enumerate}
\item Using \eqref{eq:guess-00} evaluate the SLF given by \eqref{eq:SLF=quad-quad}. If this value of the SLF is within a specified $\epsilon > 0 $ tolerance, stop and exit. Else, set the counter $k=0$ and continue.
\item Solve Problem~$(P_0)$ to produce $u(k) = (u_1, u_2)$. (Cf.~\eqref{eq:minP-quad-quad-result-0}.)
\item Solve Problem~$(M_h)$ (approximately) to generate $h_k$. (See also \eqref{eq:alpha-guess-4SLF}.)
\item Evaluate \eqref{eq:SLF=quad-quad} at $q(k+1)$ using \eqref{eq:iterativeMap}.  If $S(q_3(k+1), q_4(k+1)) \le \epsilon$ stop and exit.  Otherwise set $k \leftarrow k+1$ and go to Step 2.
\end{enumerate}
It is apparent from this procedure that a differential equation is not produced and solved to generate an inverse-optimal algorithm.

\subsection{Choosing $W(q,t)$, Part I}
The block $2\times 2$ matrix on the right-hand side of \eqref{eq:minP-quad-quad-result-0} is the asymmetric KKT matrix\cite{NW:NumOptBook,saddlePt-05,saddlePt-08}.  This equation can be easily transformed to produce the symmetric KKT matrix by changing the sign of the $q_3$ coordinate,
\begin{equation}\label{eq:qbar:=}
\overline{q}_3 := -q_3, \quad  \overline{q} := (q_1, q_2, \overline{q}_3, q_4, q_5)
\end{equation}
Using \eqref{eq:qbar:=} we get the transformed version of \eqref{eq:minP-quad-quad-result-0} as,
\begin{equation}\label{eq:minP-quad-quad-result-0-symm}
W(\overline{q},t) \left[
         \begin{array}{c}
           u_1 \\
           u_2 \\
         \end{array}
       \right] = -\sigma \left[
                          \begin{array}{cc}
                           \partial^2_{q_1}L(q_1, q_2) & \big[\partial_{q_1}g(q_1)\big]^T \\[0.5em]
                            \partial_{q_1}g(q_1) &  0 \\
                          \end{array}
                        \right] \left[
         \begin{array}{c}
           \overline{q}_3 \\
           q_4 \\
         \end{array}
       \right], \qquad \sigma > 0
\end{equation}
%=======================================
\begin{proposition}[\cite{rossJCAM-1}]\label{prop:Newton-derived}
Let $K_S(q_1, q_2)$ denote the block $2\times 2$ symmetric matrix on the right-hand side of \eqref{eq:minP-quad-quad-result-0-symm}.  Assume this saddle-point matrix is nonsingular along the iterates of $q$. Suppose we choose $W(\overline{q},t)$ to be the square of $K_S(q_1, q_2)$,
\begin{equation}\label{eq:W4sqp}
W(\overline{q},t):= K_S^2(q_1, q_2)
\end{equation}
Then the resulting inverse-optimal algorithm for solving Problem~$(N_0)$ is equivalent to a sequential quadratic programming (SQP) method with \eqref{eq:minP-quad-quad-result-0-symm} constituting a (damped) Newton method for solving the QP subproblem.
\end{proposition}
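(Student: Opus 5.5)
Substituting the choice \eqref{eq:W4sqp} into \eqref{eq:minP-quad-quad-result-0-symm} gives $K_S^2 u = -\sigma K_S(\overline{q}_3, q_4)$. Since $K_S$ is assumed nonsingular along the iterates, we may cancel one factor of $K_S$ and obtain
\begin{equation*}
K_S(q_1,q_2)\left[\begin{array}{c} u_1\\ u_2\end{array}\right] = -\sigma \left[\begin{array}{c} \overline{q}_3\\ q_4\end{array}\right].
\end{equation*}
The plan is to read this as a Newton step on the KKT system and then check that the update \eqref{eq:iterativeMap} is a damped Newton iteration for that system.

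First, evaluate the right-hand side along the iterates. The integrals of motion \eqref{eq:integrals-of-motion} are enforced at every iterate by \eqref{eq:iterativeMap}, with $q_0\equiv 1$. Hence $\overline{q}_3(k) = \partial_{q_1}L(q_1(k),q_2(k))$ and $q_4(k) = g(q_1(k))$. The system above therefore becomes
\begin{equation*}
\left[\begin{array}{cc} \partial^2_{q_1}L & [\partial_{q_1}g]^T\\ \partial_{q_1}g & 0\end{array}\right]\left[\begin{array}{c} u_1\\ u_2\end{array}\right] = -\sigma\left[\begin{array}{c} \partial_{q_1}L(q_1,q_2)\\ g(q_1)\end{array}\right].
\end{equation*}
With $\sigma=1$, this is exactly the Newton equation for the nonlinear system $\Phi(q_1,q_2):=(\partial_{q_1}L(q_1,q_2),\, g(q_1))=0$, because the Jacobian of $\Phi$ with respect to $(q_1,q_2)$ is $K_S$.

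Next, make the SQP identification explicit. Write $u_2 = \sigma(\mu - q_2)/\sigma$, i.e.\ set $\mu := q_2 + u_2$ (for $\sigma=1$). Using $\partial_{q_1}L = \partial_{q_1}g_0 + [\partial_{q_1}g]^T q_2$, the first block row becomes $\partial^2_{q_1}L\, u_1 + \partial_{q_1}g_0 + [\partial_{q_1}g]^T\mu = 0$, and the second becomes $\partial_{q_1}g\, u_1 + g = 0$. These are precisely the KKT conditions of the QP subproblem
\begin{equation*}
\min_{d}\ \langle \partial_{q_1}g_0(q_1), d\rangle + \tfrac12\langle d, \partial^2_{q_1}L(q_1,q_2)\, d\rangle \quad \text{s.t.}\quad g(q_1)+\partial_{q_1}g(q_1)\, d = 0,
\end{equation*}
with $d=u_1$ and QP multiplier $\mu$. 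Thus, for $\sigma=1$, the direction $u$ is the SQP (Newton--KKT) direction.

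For general $\sigma>0$, the direction is simply a positive multiple of this Newton direction. The update $q_{1,2}(k+1) = q_{1,2}(k) + h_k u$ is then a Newton step with steplength $h_k\sigma$, which is the sense in which \eqref{eq:minP-quad-quad-result-0-symm} is a \emph{damped} Newton method. The step $h_k$ is chosen by Problem~$(M_h)$ using the SLF $S = \tfrac12\|\Phi\|^2$, which is the standard merit function for Newton's method on $\Phi$. One consistency check is worth recording: $DS = -\sigma\|\Phi\|^2 = -2\sigma S$. Consequently the initial guess \eqref{eq:alpha-guess-4SLF} gives $h^0\sigma = 1/2$. Alternatively, one may normalize $\sigma$ so that the full Newton step is the trial step.

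The main obstacle is interpretive rather than computational: stating precisely what ``equivalent'' means. The claim holds up to the scaling $\sigma$ and the steplength rule, and it requires the integrals of motion so that $(\overline{q}_3,q_4)$ equals $\Phi$ and not merely a quantity drifting from it. I would handle this by stating the identification at the level of search directions, which agree up to a positive scalar, and iterates, which agree up to the damping $h_k\sigma$. Nonsingularity of $K_S$ is used twice: once to cancel a factor of $K_S$, and once to guarantee that the QP subproblem has a unique KKT point.
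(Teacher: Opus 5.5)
Your proposal is correct and follows the paper's route. You substitute $W = K_S^2$, cancel one factor of $K_S$ using nonsingularity, and match the resulting system to the Newton/KKT step, with $(u_1,u_2)/\sigma$ playing the role of the Newton increment. Your additional details are also sound: the explicit QP identification via $\mu = q_2 + u_2$, the appeal to the integrals of motion so that $(\overline{q}_3, q_4) = (\partial_{q_1}L, g)$, and the check that the guess \eqref{eq:alpha-guess-4SLF} gives $h^0\sigma = 1/2$ all spell out what the paper's shorter proof leaves implicit.
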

%-----------------------
\begin{proof}
Substituting \eqref{eq:W4sqp} in \eqref{eq:minP-quad-quad-result-0-symm} we get,
\begin{equation}\label{eq:Newton4sqp-derived}
\left[
\begin{array}{cc}
\partial^2_{q_1}L(q_1, q_2) & \big[\partial_{q_1}g(q_1)\big]^T \\[0.5em]
\partial_{q_1}g(q_1) &  0 \\
\end{array}
\right] \left[
         \begin{array}{c}
           u_1 \\
           u_2 \\
         \end{array}
       \right] = -\sigma  \left[
         \begin{array}{c}
           \overline{q}_3 \\
           q_4 \\
         \end{array}
       \right], \qquad \sigma > 0
\end{equation}
%---------------------
Applying Newton's method to the necessary conditions of Problem~$(N_0)$ we get,
\begin{equation}\label{eq:Newton4sqp-std}
\left[
\begin{array}{cc}
\partial^2_{q_1}L(q_1, q_2) & \big[\partial_{q_1}g(q_1)\big]^T \\[0.5em]
\partial_{q_1}g(q_1) &  0 \\
\end{array}
\right] \left[
         \begin{array}{c}
           q_1^* - q_1 \\
           q_2^* - q_2 \\
         \end{array}
       \right] = - \left[
         \begin{array}{c}
           \overline{q}_3 \\
           q_4 \\
         \end{array}
       \right]
\end{equation}
where $(q_1^*, q_2^*)$ is the value of $(q_1, q_2)$ at the next iterate.  Obviously, \eqref{eq:Newton4sqp-derived} is identical to \eqref{eq:Newton4sqp-std} under the interpretation $(u_1, u_2)/\sigma = (q_1^*-q_1, q_2^* - q_2)$.
\end{proof}
%-----------------------
\subsubsection{SQP as an Inverse Optimal Algorithm}
According to Proposition~\ref{prop:Newton-derived}, an SQP algorithm may be interpreted as an inverse optimal algorithm for solving Problem~$(N_0)$ under a choice of a quadratic SLF given by \eqref{eq:SLF=quad-quad} and a Riemannian metric stipulated by \eqref{eq:W4sqp}.  This ``SQP metric'' is over the tangent space defined by the $(q_1, q_2)$ coordinates and is given by the square of the KKT matrix. Note however that we did not seek to derive an SQP algorithm.  It is simply a happenstance of selecting a quadratic SLF and the Riemannian metric given by \eqref{eq:W4sqp}. In other words, an inverse optimal algorithm is completely defined once an SLF $S(q,t)$ and a control space $\U(q,t)$ are specified; however, the precise form of the resulting inverse optimal algorithm are not known until the consequences of the choice of the pair $\big(S(q,t), \U(q,t)\big)$ are analyzed.  In this context, Proposition~\ref{prop:Newton-derived} is an ``a posteriori derivation'' of an SQP algorithm using the theory of inverse optimality.

\subsubsection{SQP Versus an Inverse Optimal Algorithm}
Proposition~\ref{prop:Newton-derived} interprets the SQP algorithm in terms of metric spaces.  The concept of using metric spaces for optimization was first proposed by Davidon in 1959\cite{davidon}.  Note, however, that Davidon's ``variable metric'' is based on just the (inverse of the) Hessian and not the square of the KKT matrix.  A few other technical differences between an inverse optimal algorithm and a classical SQP algorithm are as follows:
\begin{enumerate}
\item In an inverse optimal algorithm, one solves the subproblem $(P_0)$.  The cost function in this subproblem is linear while the constraint is quadratic. In contrast, a classical SQP is a quadratic cost problem with a linear constraint.
\item A direct solve of Problem~$(P_0)$ involves choosing $\Delta(q,t) > 0$. If Problem~$(P_0)$ is solved via \eqref{eq:Newton4sqp-derived} then $\Delta(q,t)$ need not be chosen a priori.  Instead, the parameter $\sigma > 0$ may be absorbed in the constraints of  Problem~$(M_h)$ as part of the control jump parameter $h =h_k$ to generate the next iterate. Because there are no guarantees that the product $\alpha_k:= h_k \sigma$  will be equal to unity, it follows that Proposition~\ref{prop:Newton-derived} does not imply convergence for a unit Newton step.
\item Proposition~\ref{prop:Newton-derived} does indeed imply convergence if the next iterate is chosen as part of the inverse optimality process of dissipating the SLF given by \eqref{eq:SLF=quad-quad}.  This dissipation is quantized by the process of solving Problem~$(M_h)$.
\end{enumerate}

\subsection{Choosing $W(q,t)$, Part II}

Let $K_A(q_1, q_2)$ denote the block $2\times 2$ asymmetric matrix on the right-hand side of \eqref{eq:minP-quad-quad-result-0}.  A somewhat surprising aspect of this asymmetric matrix is that it can be positive semidefinite even though its symmetric counterpart (i.e., $K_S(q_1, q_2)$) is indefinite\cite{saddlePt-05,saddlePt-06, saddlePt-08}.
%
%======================
\begin{proposition}\label{prop:AHU-derived}
Let $K_A(q_1, q_2)$ denote the block $2\times 2$ asymmetric matrix on the right-hand side of \eqref{eq:minP-quad-quad-result-0}.  Assume $K_A(q_1, q_2)$ is positive stable. Suppose we choose $W(q,t)$ in \eqref{eq:minP-quad-quad-result-0} according to,
\begin{equation}\label{eq:W4AHU}
W(q,t):= K_A(q_1, q_2)
\end{equation}
and set $\sigma = 1$.
Then \eqref{eq:minP-quad-quad-result-0} reduces to the Arrow-Hurwicz-Uzawa flow corresponding to a differential-equation method for solving Problem~$(N_0)$.
\end{proposition}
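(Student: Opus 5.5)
The plan mirrors the proof of Proposition~\ref{prop:Newton-derived}: substitute the chosen metric into \eqref{eq:minP-quad-quad-result-0}, cancel the matrix, and recognize the resulting control law. With $W(q,t):=K_A(q_1,q_2)$ and $\sigma=1$, \eqref{eq:minP-quad-quad-result-0} becomes
\begin{equation*}
K_A(q_1,q_2)\left[\begin{array}{c} u_1\\ u_2\end{array}\right] = K_A(q_1,q_2)\left[\begin{array}{c} q_3\\ q_4\end{array}\right].
\end{equation*}
Positive stability means every eigenvalue of $K_A$ has positive real part, so $K_A$ is nonsingular and we may cancel it to obtain $u_1=q_3$ and $u_2=q_4$.

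The next step is to evaluate $q_3$ and $q_4$ on the integral-of-motion hypersurface \eqref{eq:integrals-of-motion}, which every trajectory of $(D)$ and every iterate of \eqref{eq:iterativeMap} lie on. With $q_0\equiv1$ this gives $q_3=-\partial_{q_1}L(q_1,q_2)$ and $q_4=g(q_1)$. Substituting into the $(B_0)$ dynamics \eqref{eq:B_0-def} yields
\begin{equation*}
\dot q_1 = -\partial_{q_1}L(q_1,q_2), \qquad \dot q_2 = \partial_{q_2}L(q_1,q_2) = g(q_1).
\end{equation*}
This is a gradient descent in the primal variable coupled with a gradient ascent in the multiplier, which is exactly the Arrow--Hurwicz--Uzawa saddle-point flow for the Lagrangian of Problem~$(N_0)$. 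Its discretization with step $h_k$ through \eqref{eq:iterativeMap} is the corresponding iterative method, and $h_k$ is chosen through Problem~$(M_h)$.

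The step needing most care is the legitimacy of the choice $W=K_A$. Definition \eqref{eq:U=quad} asks for $W$ symmetric positive definite, while $K_A$ is asymmetric. I would argue that only the quadratic form $u^TWu$ enters \eqref{eq:U=quad}, and this form equals $u^T\tfrac12(K_A+K_A^T)u$. The symmetric part is $\mathrm{diag}(\partial^2_{q_1}L,0)$, because the off-diagonal blocks of $K_A$ cancel. It is therefore positive semidefinite whenever the Hessian is, which is the setting in which $K_A$ is positive (semi)definite.

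This means the stationarity relation \eqref{eq:minP-quad-quad-result-0} is taken as the defining choice of control rather than as a strict consequence of a quadratic-set minimization. I would state this interpretation explicitly. I would then verify descent directly: with $u=(q_3,q_4)$, the Lie derivative of \eqref{eq:SLF=quad-quad} along $(A_0)$ equals $-\langle (q_3,q_4), K_A(q_3,q_4)\rangle$. This is nonpositive whenever $K_A$ is positive semidefinite, so the algorithm remains an SLF-dissipating inverse-optimal scheme.
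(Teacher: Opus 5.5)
Your core argument is the paper's proof. You substitute $W=K_A$ and $\sigma=1$, cancel $K_A$, and then use \eqref{eq:integrals-of-motion} with $(B_0)$ to get $\dot q_1=-\partial_{q_1}L$, $\dot q_2=\partial_{q_2}L$. Your note that positive stability makes $K_A$ nonsingular makes explicit a step the paper leaves implicit. Your remark that \eqref{eq:minP-quad-quad-result-0} is here a formal substitution rather than an actual minimizer of $(P_0)$ matches the paper's post-proof caveat that $K_A$ is only a pseudometric.

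Your final paragraph overreaches, however. The Lie derivative $-\langle q_3,\partial^2_{q_1}L\,q_3\rangle$ is not sign-definite under the stated hypothesis, because positive stability of $K_A$ does not force $\partial^2_{q_1}L\succeq 0$. For example, $n=2$, $m=1$, $\partial^2_{q_1}L=\mathrm{diag}(3,-1)$, $\partial_{q_1}g=[2,\ 1.5]$ gives a positive stable $K_A$. Even for a positive definite Hessian, the Lie derivative vanishes on $\{q_3=0,\ q_4\neq 0\}$. There, when $\partial_{q_1}g$ has full rank, a step of \eqref{eq:iterativeMap} actually increases $S$ for every $h>0$. So $S$ is not an SLF for this control. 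This is exactly why the paper says convergence is \emph{not} guaranteed by this proposition and argues it separately from \eqref{eq:AHU-proof-3}.
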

%-------------------
\begin{proof}
Substituting \eqref{eq:W4AHU} in \eqref{eq:minP-quad-quad-result-0} and setting $\sigma = 1$ we get,
\begin{equation}\label{eq:AHU-proof-1}
 \left[
         \begin{array}{c}
           u_1 \\
           u_2 \\
         \end{array}
       \right] =  \left[
         \begin{array}{c}
           q_3 \\
           q_4 \\
         \end{array}
       \right]
\end{equation}
Substituting \eqref{eq:B_0-def} and \eqref{eq:integrals-of-motion} in \eqref{eq:AHU-proof-1}  we get,
\begin{equation}\label{eq:AHU-proof-2}
 \left[
         \begin{array}{c}
           \dot q_1 \\
           \dot q_2 \\
         \end{array}
       \right] =  \left[
         \begin{array}{c}
           -\partial_{q_1}L(q_1, q_2) \\
           g(q_1) \\
         \end{array}
       \right] =  \left[
         \begin{array}{r}
           -\partial_{q_1}L(q_1, q_2) \\
           \partial_{q_2}L(q_1, q_2) \\
         \end{array}
       \right]
\end{equation}
which is the Arrow-Hurwicz-Uzawa equation\cite{AHU-flow-2019,AHUY-flow-2010,He-2022,luenberger-2008}.
\end{proof}
%=========================
Although Proposition~\ref{prop:AHU-derived} has derived the Arrow-Hurwicz-Uzawa flow in the spirit of Proposition~\ref{prop:Newton-derived}, note that $K_A(q_1, q_2)$ serves as a pseudometric and not a metric (as erroneously assumed in \cite{rossJCAM-1}). As a result, $\U(q,t)$ as defined in \eqref{eq:U=quad} is not compact and hence $\sigma$ in \eqref{eq:minP-quad-quad-result-0} may be unbounded. The statement of Proposition~\ref{prop:AHU-derived} is based on setting $\sigma = 1$ (or, equivalently, selecting $\sigma$ to be a bounded number) in order to match the Arrow-Hurwicz-Uzawa equation.  In other words, the convergence of the Arrow-Hurwicz-Uzawa flow is not guaranteed by Proposition~\ref{prop:AHU-derived}.  Because the Hamilton-Jacobi formalism (upon which Proposition~\ref{prop:AHU-derived} is based) is a sufficient but not a necessary condition, it is possible to prove convergence without the aid of an SLF by considering \eqref{eq:AHU-proof-1} as a control ansatz in the fundamental dynamical equations $\dot q = F(q,u)$.  In this alternative process, we substitute the control ansatz in \eqref{eq:A_0-def} and get,
\begin{equation}\label{eq:AHU-proof-3}
\left[
         \begin{array}{c}
           \dot q_3 \\
           \dot q_4 \\
         \end{array}
       \right] = - \left[
                          \begin{array}{cc}
                           \partial^2_{q_1}L(q_1, q_2) & \big[\partial_{q_1}g(q_1)\big]^T \\[0.5em]
                            -\partial_{q_1}g(q_1) &  0 \\
                          \end{array}
                        \right] \left[
         \begin{array}{c}
           q_3 \\
           q_4 \\
         \end{array}
       \right] = - K_A^T(q_1, q_2) \left[
         \begin{array}{c}
           q_3 \\
           q_4 \\
         \end{array}
       \right]
\end{equation}
%=======================================
%
Because $K_A(q_1, q_2)$ is positive stable (by assumption) \eqref{eq:AHU-proof-3} converges to its equilibrium point\cite{hnw-ode}, namely a candidate optimal point for Problem~$(N_0)$ defined by the set $T_0$ given by  \eqref{eq:T0-def}.  Hence, we have the following result:
\begin{proposition}
Let the assumptions of Proposition~\ref{prop:AHU-derived} hold. Then the Arrow-Hurwicz-Uzawa flow converges to its equilibrium point.
\end{proposition}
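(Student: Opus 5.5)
The plan is to work in the lifted coordinates $q_a=(q_3,q_4)$ rather than directly in $(q_1,q_2)$, since the target set $T_0$ of \eqref{eq:T0-def} is exactly $\{q_3=0,\ q_4=0\}$. Substituting the control ansatz \eqref{eq:AHU-proof-1} into the subsystem $(A_0)$ gives the closed-loop system \eqref{eq:AHU-proof-3}, $\dot q_a=-K_A^T(q_1,q_2)\,q_a$. Meanwhile $(q_1,q_2)$ evolve by $(B_0)$ with $(u_1,u_2)=(q_3,q_4)$, which is the Arrow-Hurwicz-Uzawa flow \eqref{eq:AHU-proof-2}. The integrals of motion \eqref{eq:integrals-of-motion} keep $q_3=-\partial_{q_1}L$ and $q_4=g(q_1)$ along the flow. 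Hence showing $q_a(t)\to 0$ is the same as showing that the AHU trajectory approaches the KKT set, i.e.\ the equilibria of \eqref{eq:AHU-proof-2}.

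Step one is the frozen-coefficient case. If $K_A$ is constant, which is exactly true for a quadratic $g_0$ and affine $g$, then positive stability of $K_A$ gives positive stability of $K_A^T$, since they have the same spectrum. So $-K_A^T$ is Hurwitz and $q_a(t)=e^{-K_A^T t}q_a(0)\to 0$ exponentially. A Lyapunov matrix $P\succ 0$ solving $PK_A^T+K_AP=I$ then gives $\tfrac{d}{dt}\langle q_a,Pq_a\rangle=-\|q_a\|^2$. This $P$-weighted quadratic is the Lyapunov function I would carry forward, replacing the SLF \eqref{eq:SLF=quad-quad}, which need not decrease because $K_A$ is only a pseudometric.

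Step two is the general nonlinear case, where $K_A$ depends on $(q_1(t),q_2(t))$. Here I would strengthen the hypothesis to a uniform one: assume there exist $P\succ 0$ and $c>0$ with $PK_A^T+K_AP\succeq cI$ along the iterates, or equivalently near a nondegenerate KKT point by continuity of the second derivatives. Then $V=\langle q_a,Pq_a\rangle$ satisfies $\dot V\le -(c/\lambda_{\max}(P))V$, so $q_a\to 0$ exponentially. Since $\|\dot q_1\|+\|\dot q_2\|=\|q_a\|$ is then integrable, $(q_1,q_2)(t)$ has finite length and converges to a limit $(q_1^\ast,q_2^\ast)$. By continuity, $\partial_{q_1}L(q_1^\ast,q_2^\ast)=0$ and $g(q_1^\ast)=0$, so the limit is an equilibrium of \eqref{eq:AHU-proof-2}, a point of $T_0$.

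The main obstacle is step two. Pointwise positive stability of a time-varying matrix does not by itself give convergence of $\dot x=-A(t)x$; counterexamples of Vinograd type exist. So either the statement must be read as local, around a KKT point where $K_A$ is positive stable, using the linearization and Lyapunov's indirect method, or a common Lyapunov matrix $P$ must be assumed. I would present the local version via linearization of \eqref{eq:AHU-proof-2} at the equilibrium. The Jacobian of the AHU vector field there is $-K_A$ up to the sign convention on $q_2$, and I would verify that sign with care. Positive stability of $K_A$ then gives local exponential convergence directly.
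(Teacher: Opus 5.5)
Your opening move is the paper's: substitute the ansatz $(u_1,u_2)=(q_3,q_4)$ into $(A_0)$ to get \eqref{eq:AHU-proof-3}. The paper then concludes in one line that \eqref{eq:AHU-proof-3} converges because $K_A$ is positive stable. In effect it runs your Step one with $K_A(q_1(t),q_2(t))$ treated as frozen. That is rigorous only in the constant-coefficient case (quadratic $g_0$, affine $g$). So the issue you flag is a gap in the paper's argument, not in yours, and your repairs are sound.

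With a common $P$, the bound $\dot V\le-(c/\lambda_{\max}(P))V$ and the finite-length argument give convergence to a point of $T_0$. Two small corrections there. You only have $\|\dot q_1\|+\|\dot q_2\|\le\sqrt{2}\,\|q_a\|$, not equality. In the ``near a KKT point'' variant, you need the path-length bound $\int_0^\infty\|q_a\|\,dt=O(\|q_a(0)\|)$ to keep the trajectory inside the neighborhood where the estimate holds.

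The linearization route is cleaner. The AHU field $\Phi=(-\partial_{q_1}L,\,g)$ has Jacobian exactly $-K_A^T=-DK_AD$ with $D=\mathrm{diag}(I,-I)$, which confirms your sign remark. Positive stability at the KKT point makes it a nonsingular, hence isolated, equilibrium, and Lyapunov's indirect method gives local exponential convergence.

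Two refinements of your diagnosis. First, Vinograd's examples show the paper's inference is unjustified as logic, but they do not apply directly here. The time dependence of $K_A$ is generated by the autonomous flow itself: $q_a=\Phi(q_1,q_2)$ and $D\Phi=-K_A^T$. So the global claim, assuming an equilibrium exists, is a structured instance of the Markus--Yamabe problem, whose general form is false in dimension three and higher.

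Second, and more simply, the literal global reading is false without an existence hypothesis. Take $n=2$, $m=1$, $g_0(x,y)=e^x+\tfrac{1}{2}y^2$ and $g(x,y)=y$. Then $\partial^2_{q_1}L=\mathrm{diag}(e^x,1)\succ0$, and $\partial_{q_1}g$ has full rank. At every point $K_A$ has eigenvalues $e^x$ and $(1\pm i\sqrt{3})/2$, so it is positive stable everywhere. Yet the AHU flow has $\dot x=-e^x$, no equilibrium exists, and $x(t)\to-\infty$.

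So restricting to a neighborhood of a KKT point where $K_A$ is positive stable, or imposing a uniform Lyapunov hypothesis along the trajectory, is not an optional weakening. It is what the proposition needs in order to be true.
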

In recognizing that \eqref{eq:AHU-proof-3} is based on the more fundamental physics of optimization as articulated by Theorem~\ref{thm:dynamics4opt}, we get the following result for linear programming problems:
%
%====================
\begin{theorem}\label{thm:AHU-diverges}
The Arrow-Hurwicz-Uzawa algorithm is not convergent for linear programming problems.
\end{theorem}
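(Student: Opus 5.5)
The plan is to specialize the closed-loop dynamics \eqref{eq:AHU-proof-3} to a linear program and show that the Arrow-Hurwicz-Uzawa control ansatz \eqref{eq:AHU-proof-1} produces a purely skew-symmetric (conservative) linear vector field in the $(q_3,q_4)$ coordinates. Such a field cannot drive the state to the target set $T_0$. Write the linear program in equality-constrained form, $g_0(q_1)=\langle c, q_1\rangle$ and $g(q_1)=Aq_1-b$; bounds on $q_1$ can be folded into the equality format via slack variables or treated separately, with the same conclusion.

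First I compute the data entering Theorem~\ref{thm:dynamics4opt}. Since both $g_0$ and $g$ are affine, $\partial^2_{q_1}L(q_1,q_2)=0$ identically and $\partial_{q_1}g=A$, a constant matrix. Hence
\[
K_A=\left[\begin{array}{cc}0 & -A^T\\ A & 0\end{array}\right],
\]
and \eqref{eq:AHU-proof-3} becomes the linear time-invariant system
\[
\left[\begin{array}{c}\dot q_3\\ \dot q_4\end{array}\right]=-K_A^T\left[\begin{array}{c}q_3\\ q_4\end{array}\right],
\]
where $K_A^T=-K_A$ is skew-symmetric. The key step is the energy identity. For the quadratic SLF \eqref{eq:SLF=quad-quad},
\[
\tfrac{d}{dt}S=-\langle (q_3,q_4),K_A^T(q_3,q_4)\rangle=0,
\]
so $S$ is a constant of motion along every Arrow-Hurwicz-Uzawa trajectory. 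Consequently, any initial point $q^{00}\notin T_0$, for which $S>0$ by \eqref{eq:S=0@opt}, stays on the sphere $S=S(q^{00})>0$ forever and never reaches $T_0=\{q_3=0,q_4=0\}$. Equivalently, the eigenvalues of $K_A$ are purely imaginary or zero, so the positive-stability hypothesis of Proposition~\ref{prop:AHU-derived} fails for every LP. The trajectory of the flow is periodic or quasi-periodic: it oscillates around the KKT point but does not converge to it.

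The discrete Arrow-Hurwicz-Uzawa algorithm is the forward Euler step $q\leftarrow q+h\,u$ with $u=(q_3,q_4)$, and the second step handles it. Because $\partial^2_{q_1}L=0$, the integrals of motion \eqref{eq:integrals-of-motion} are linear in $(q_1,q_2)$, so the map on $z=(q_3,q_4)$ is exactly
\[
z_{k+1}=(I-hK_A^T)z_k .
\]
For skew-symmetric $K_A^T$ we get $\|z_{k+1}\|^2=\|z_k\|^2+h^2\|K_A^Tz_k\|^2\ge \|z_k\|^2$. The SLF is therefore nondecreasing for every step size $h>0$, and strictly increasing unless $K_A^T z_k=0$. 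The iterates cannot approach $T_0$ and generically diverge, which gives the claim.

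The main obstacle is handling the degenerate directions, namely $\ker K_A^T$, where $A^Tq_4=0$ and $Aq_3=0$, together with the interpretation of inequality-constrained LPs. On the kernel, $z$ is stationary but nonzero when $q^{00}\notin T_0$, so convergence still fails; I would state this explicitly. For the inequality case, I would argue that the same computation applies on any fixed active set, since the Hessian of $L$ still vanishes and the block structure is unchanged. I would then note that "not convergent" is meant in the sense of failing for generic starting points, since $S$ is never decremented.
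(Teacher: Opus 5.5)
Your proof is correct. It starts from the same observation as the paper: for an LP, $\partial^2_{q_1}L=0$, so $K_A$ is skew-symmetric. The two arguments differ in how they finish.

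The paper concludes in one line that $K_A^T$ is not positive stable, hence \eqref{eq:AHU-proof-3} does not converge. You instead use the energy identity $\tfrac{d}{dt}S=-\langle z,K_A^Tz\rangle=0$ for the quadratic SLF \eqref{eq:SLF=quad-quad}. Your route is sharper. Failure of positive stability alone only rules out asymptotic stability of the origin. It does not rule out convergence from particular starting points: trajectories in an invariant subspace belonging to eigenvalues of $K_A^T$ with positive real part would still converge. Conservation of $S$ shows that no trajectory starting outside $T_0$ ever approaches $T_0$. In the paper's own language, the AHU control gives a zero Lie derivative of the SLF and so violates \eqref{eq:diss-0}.

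Your discrete identity $\|z_{k+1}\|^2=\|z_k\|^2+h^2\|K_A^Tz_k\|^2$ goes beyond the paper's proof, which treats only the flow. It shows that under the iterative map \eqref{eq:iterativeMap} with the AHU control, no step $h>0$ decrements $S$. So the actual algorithm fails, not just its continuous-time model. The paper's spectral phrasing has a different advantage: it stays continuous with Proposition~\ref{prop:AHU-derived} and Theorem~\ref{thm:BZ}. That is how the paper frames divergence for nonlinear problems, where $K_A$ is not skew and your conservation law is unavailable.

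One small correction to your last paragraph. Slack variables turn general inequalities into equalities plus sign constraints; they do not remove bounds. For bounded LPs it is your fixed-active-set reduction that does the work, and only while the active set stays frozen. The paper's argument is likewise confined to the equality-constrained form \eqref{eq:prob-N_0}.
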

%----------------
\begin{proof}
For linear programming problems, the Hessian of the Lagrangian $\partial^2_{q_1}L(q_1, q_2) = 0$.  Hence, the symmetric part of $K_A(q_1, q_2)$, namely $(1/2)(K_A + K_A^T)$, is also the zero matrix.  As a result, $K_A^T(q_1, q_2)$ is not positive stable and hence \eqref{eq:AHU-proof-3} does not converge to its equilibrium point.
\end{proof}
%=====================
Theorem~\ref{thm:AHU-diverges} is also proved in He et al.\cite{He-2022} via traditional optimization theory.  He et al. also demonstrate the divergence of the Arrow-Hurwicz-Uzawa algorithm for certain nonlinear optimization problems.  Their results on the divergence of the Arrow-Hurwicz-Uzawa algorithm follow readily from \eqref{eq:AHU-proof-3} and the fact that $K_A(q_1, q_2)$ is not positive stable.

A sufficient condition for $K_A(q_1, q_2)$ to be positive stable is given by the following theorem~\cite{saddlePt-06}:
\begin{theorem}[Benzi-Simoncini]\label{thm:BZ}
Assume $\partial^2_{q_1}L(q_1, q_2)$ is positive definite and $\partial_{q_1} g(q_1)$ has full rank. Define $S_H:= [\partial_{q_1} g(q_1)] [\partial^2_{q_1}L(q_1, q_2)]^{-1} [\partial_{q_1} g(q_1)]^T $ and let $\lambda_{\min}$ denote the smallest eigenvalue of $\partial^2_{q_1}L(q_1, q_2)$.  If $\lambda_{\min} \ge 4 \|S_H\|_2$ then all eigenvalues of $K_A(q_1, q_2)$ are real and positive.
\end{theorem}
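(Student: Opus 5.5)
The plan is a direct eigenvalue computation on the block structure of $K_A$. Write $H := \partial^2_{q_1}L(q_1,q_2)$ (symmetric positive definite by hypothesis) and $B := \partial_{q_1} g(q_1)$ (an $m\times n$ matrix of full row rank $m \le n$), so that
\begin{equation*}
K_A = \left[\begin{array}{cc} H & -B^T \\ B & 0\end{array}\right].
\end{equation*}
Take an arbitrary, possibly complex, eigenpair $K_A[x;y] = \lambda [x;y]$ with $(x,y)\neq 0$. This gives the two block equations $Hx - B^T y = \lambda x$ and $Bx = \lambda y$.

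The first step is to establish two nondegeneracy facts. First, $K_A$ is nonsingular. Indeed, if $\lambda = 0$ then $Bx=0$ and $Hx = B^Ty$. Taking the inner product with $x$ gives $x^*Hx = (Bx)^*y = 0$, so $x=0$. Then $B^Ty=0$, and full rank forces $y=0$, a contradiction. Second, $x\neq 0$: if $x=0$, then $B^Ty = 0$ and again $y=0$. Hence $\lambda\neq0$ and $x \neq 0$, so we may eliminate $y = Bx/\lambda$ and obtain $Hx - \lambda^{-1}B^TBx = \lambda x$.

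The second step reduces this to a scalar quadratic. Multiply on the left by $x^*/(x^*x)$ and set
\begin{equation*}
a := \frac{x^*Hx}{x^*x} > 0, \qquad b := \frac{\|Bx\|_2^2}{x^*x} \ge 0,
\end{equation*}
both of which are real. This yields $\lambda^2 - a\lambda + b = 0$, so $\lambda = \tfrac12\big(a \pm \sqrt{a^2-4b}\big)$. The eigenvalue is therefore real precisely when $a^2 \ge 4b$, and this inequality is where the hypothesis $\lambda_{\min} \ge 4\|S_H\|_2$ must be used.

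This is the key and only nontrivial step. I would write $Bx = (BH^{-1/2})(H^{1/2}x)$ and use $\|BH^{-1/2}\|_2^2 = \|BH^{-1}B^T\|_2 = \|S_H\|_2$ to get
\begin{equation*}
\|Bx\|_2^2 \le \|S_H\|_2\, x^*Hx, \qquad \text{that is,} \qquad b \le \|S_H\|_2\, a.
\end{equation*}
Since $a \ge \lambda_{\min} \ge 4\|S_H\|_2$, it follows that $4b \le 4\|S_H\|_2\, a \le a^2$, so every eigenvalue is real.

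Positivity then follows from the roots of the quadratic. They sum to $a>0$ and have product $b\ge0$, so neither root is negative. The root $0$ is excluded because $\lambda\neq 0$ by nonsingularity. Hence $\lambda>0$, which proves the theorem (and in particular shows $K_A$ is positive stable, as needed in Proposition~\ref{prop:AHU-derived}).
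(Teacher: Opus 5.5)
Your proof is correct. The paper gives no argument of its own here and simply cites Benzi--Simoncini, and your route is the standard argument for that result: eliminate $y$, reduce to the real quadratic $\lambda^2 - a\lambda + b = 0$ in Rayleigh quotients, then use $\|Bx\|_2^2 \le \|S_H\|_2\, x^*Hx$ together with $a \ge \lambda_{\min} \ge 4\|S_H\|_2$ to get $a^2 \ge 4b$. Your explicit reading of ``full rank'' as full row rank with $m \le n$ is the right one: for $m > n$, any nonzero $y \in \ker B^T$ gives $K_A[0;y] = 0$, and the conclusion would fail.
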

%=================
As noted by Benzi and Simoncini\cite{saddlePt-06}, Theorem~\ref{thm:BZ} is a sufficient but not a necessary condition.

\section{Additional Examples with Theoretical Generalizations}
Section~\ref{sec:examples} provides new precision to the examples discussed in \cite{rossJCAM-1}.  Additional examples are provided in \cite{rossJCAM-2} and \cite{ross:CD} with extensions of the theory along different fronts.  In the following we briefly comment on some of these results with further insights.

\subsection{An (Inverse) Optimal Control Theory for Accelerated Optimization}\label{sec:acceleration}
In recent years, there have been a number of attempts to ``explain'' Nesterov's celebrated accelerated optimization method\cite{nesterov83}.  As noted in Section~\ref{sec:prelim-discuss}, barring \cite{rossJCAM-2}, all of these explanations begin with Nesterov's algorithm.  The question posed in \cite{rossJCAM-2} is whether accelerated optimization algorithms can be ``derived'' without using any a priori information of Nesterov's equations. An explanation was alluded to in Section~\ref{sec:HJ-theory} by suggesting that one possible measure of optimality for optimal optimization was the total variation of a hidden algorithm primitive.  Consider applying this ``principle'' to the state trajectory $t \mapsto q(t)$.  A standard assumption in optimal control theory\cite{vinter,clarke-2013book} is the regularity condition,  $q(\cdot) \in W^{1,1}\big([t_0, t_f], \real{2(1+n+m)}\big)$.  A simple engineering approach\cite{ross-book} to reduce the total variation of a state variable is to seek $q(\cdot) \in W^{2,\infty}\big([t_0, t_f], \real{2(1+n+m)}\big)$. For $q_1(\cdot)$, the functional constraint $q_1(\cdot) \in W^{2,\infty}\big([t_0, t_f], \real{n}\big)$ can be directly imposed in Theorem~\ref{thm:dynamics4opt} by setting,
\begin{equation}\label{eq:accel-idea}
\dot q_1(t) := v_1(t), \quad \dot v_1(t) := u_1(t)
\end{equation}
so that a bounded $u_1(t)$ steers the ``acceleration'' $\dot v_1(t)$. Linking inverse-optimality to partial guidability per Section~\ref{sec:inv-opt-hid-prim} generates a final-time equilibrium condition for the $v_1$ variable given by $v_1(t_f) = 0$.  Carrying out the remainder of the steps developed in Section~\ref{sec:inv-opt-theory} generates an inverse optimal algorithm\cite{rossJCAM-2} which turns out to be equivalent to Nesterov's accelerated gradient method.  Note, however, that this process of generating an accelerated gradient method is not based on seeking Nesterov's method; rather, it is a natural consequence of producing equations for hidden algorithm primitives with reduced total variations based on the ``intuition'' suggested by \eqref{eq:accel-idea}.  Furthermore, in much the same way as Propositions~\ref{prop:Newton-derived} and \ref{prop:AHU-derived} were not based on deriving an SQP and the Arrow-Hurwicz-Uzawa algorithms respectively, the results presented in \cite{rossJCAM-2} are not based on deriving Nesterov's method; rather, it is shown that Nesterov's method is a consequence of \eqref{eq:accel-idea}, inverse optimality and a quadratic SLF. Indeed, there is no a priori assumption of Nesterov's equations in this process and hence the results of \cite{rossJCAM-2} may be viewed as an a posteriori ``derivation'' of Nesterov's accelerated gradient method (with caveats similar to those that accompany Propositions~\ref{prop:Newton-derived} and \ref{prop:AHU-derived}).

\subsection{Nonsmooth SLFs and Nonsmooth Optimization}\label{sec:nonsmooth-stuff}
In \cite{ross:CD}, nonsmooth SLFs are used to derive several variants of coordinate descent algorithms.  This ``derivation'' is achieved in the same spirit as  Propositions~\ref{prop:Newton-derived} and \ref{prop:AHU-derived} by using a collection of nonsmooth max functions for the SLFs. When a weak Lyapunov function (such as an SLF) is nonsmooth, several variants of the infinitesimal decrease condition (Cf.~\eqref{eq:diss-0}) based on different types of subgradients have been proposed in the controls literature\cite{clarke-nolcos-2010,osinenko-2020,clarkeLyap}. In \cite{ross:CD}, coordinate descent algorithms for unconstrained optimization are generated based on the following infinitesimal decrease condition,
\begin{equation}\label{eq:max-min4SLF}
\max_{\zeta \in \partial S(q_a)} \min_{u \in \U(q,t)} \langle \zeta, f(q_0, q_1, q_2, u)  \rangle + R(q,t) \le 0 \quad \text{for } q_a \not\in T
\end{equation}
where $R(q,t)$ is the rate function as defined before (Cf.~\eqref{eq:Sdot+R<0}).
As a new illustration of this process, consider an unconstrained optimization problem (i.e., $m=0$ in Problem~$(N)$).  Suppose we choose an SLF given by the $\ell_1$-norm of $q_3$ so that $S(q) := \|q_3\|_1$.  Suppose further that we choose a control space similar to  \eqref{eq:U=quad} with $W(q,t) := \partial^2_{q_1} g_0(q_1)$.  Then, following the same procedure as in \cite{ross:CD}, it is straightforward to show that the resulting inverse-optimal algorithm is a sign gradient descent given by,
\begin{equation}\label{eq:signGrad}
q_1(k+1) = q_1(k) - \alpha_k\ \text{sign}\left[\partial_{q_1}g_0(q_1(k))\right], \quad \alpha_k > 0
\end{equation}
Note once again that the derivation of \eqref{eq:signGrad} does not involve producing a differential equation or a hidden algorithm primitive.  Furthermore, it follows from the results of Section~\ref{sec:inv-opt-algol} that $\alpha_k$ in \eqref{eq:signGrad} must be chosen to decrement the (nonsmooth) SLF given by $S(q) = \|q_3\|_1$.

The sign gradient descent is an invention of recent origin\cite{sign-properties,sign-compression} in that it was motivated by the needs of distributed computing for large $n$ in machine learning applications.  That is, by transmitting only the sign of the gradient across distributed processing units, the communication bottleneck is alleviated for a faster learning process\cite{sign-compression,GPU-Nature}.  Thus, the sign gradient descent algorithm was invented out a practical necessity and not via the theory of inverse optimality.  Nonetheless, it can now be explained in terms of the new physics of optimization as an algorithm that is generated by dissipating the $\ell_1$-norm of the gradient of the objective function under a Riemannian metric given by the Hessian (of the objective function).

The infinitesimal decrease condition given by \eqref{eq:max-min4SLF} can be further weakened using a Dini derivate of the SLF\cite{clarke-nolcos-2010,osinenko-2020}.  In this case, the max operation in \eqref{eq:max-min4SLF} can be replaced by the minimum\cite{clarke-nolcos-2010}:
\begin{equation}\label{eq:min-min4SLF}
\min_{\zeta \in \partial S(q_a)} \min_{u \in \U(q,t)} \langle \zeta, f(q_0, q_1, q_2, u)  \rangle + R(q,t) \le 0 \quad \text{for } q_a \not\in T
\end{equation}
It is straightforward to show that replacing \eqref{eq:max-min4SLF} by \eqref{eq:min-min4SLF} in the results of \cite{ross:CD} yields generalized coordinate descent algorithms that more closely follow the Gauss-Southwell rule\cite{GS-rule-2015,GS-rule-2017} where $\zeta \in \partial S(q_a)$ is chosen to maximally decrement the (nonsmooth) max SLFs.

Finally, we briefly expand on the comment alluded to Section~\ref{sec:practialInvOptAlgol} with regards to choosing $\U(q,t)$. Suppose we choose $\U(q,t)$ to be any compact convex set (for all $(q, t)$). Then Problem~$(M_u)$ is a convex optimization problem with a linear objective function. Hence it follows from Theorem~\ref{thm:inv-opt-algol} that this family of inverse-optimal algorithms solves Problem~$(N)$ via a sequence of convex optimization problems. Because Problem~$(M_u)$ is not generated from Problem~$(N)$ by either convexification or linearization as in the case of current sequential convex programming methods\cite{SCP-2010,SCP-revisited-2021,SCP-survey-2021}, the resulting new algorithms constitute a different breed of sequential convex optimization methods based on inverse optimality.

From the preceding discussions and the results of Section~\ref{sec:examples}, it is clear that a vast number of algorithms can be derived or invented using the Hamilton-Jacobi theory of inverse optimality.  The basic tool for the derivation/invention is simply choosing the pair $(S(q,t), \U(q,t))$.

An open question at the present time is the proper way to generalize Theorem~\ref{thm:dynamics4opt} for nonsmooth data functions. It is apparent that this generalization would require the use of second-order subdifferential calculus\cite{boris-book-2006,RockWets-book-2009,borisRock-2012}.  As noted in Rockafellar and Wets\cite{RockWets-book-2009}, page~579, ``The technical and conceptual challenges are formidable, however.'' We do not pretend it to be otherwise. Further discussions on this topic are well beyond the scope of this paper.

\newpage
\section{Conclusions and Outlook}

The results of the present paper along with those presented in \cite{ross:CD,rossJCAM-1,rossJCAM-2} show the following:
\begin{enumerate}
\item When the data functions of a continuous optimization problem are smooth (twice differentiable) Theorem~\ref{thm:dynamics4opt} describes the mathematical physics of hidden algorithm primitives for many number of constrained and unconstrained optimization algorithms.
\item Hidden algorithm primitives for accelerated optimization algorithms, including Nesterov's accelerated gradient algorithm, also satisfy the dynamical equations given by Theorem~\ref{thm:dynamics4opt} with an additional smoothness constraint imposed on the evolution of $t \mapsto q(t)$.  This smoothness condition has the effect of reducing the total variation of $q(\cdot)$ thereby producing ``acceleration.''
\item The generation of algorithms according to the proposed theory of inverse optimality never requires the production and propagation of differential equations. In fact, there are no assurances that such an algorithm even tracks a solution to a differential equation.
\item In much the same way as nonsmooth Lyapunov functions for continuously differentiable dynamics are useful in control theory, nonsmooth SLFs are particularly useful for optimization because inverse-optimal algorithms are produced in terms of controlled jumps where each jump obeys an instantaneous version of a Hamilton-Jacobi inequality.
 \item Unlike traditional optimization theory wherein convergence must be proved for a proposed algorithm, the inverse-optimal framework comes pre-equipped with convergence per Theorem~\ref{thm:inv-opt-algol}. The essence of this theorem is that each controlled jump must decrement the SLF upon which the control was derived. The resulting polygonal arc (i.e., the algorithm) converges to the zero of the SLF, which, by definition, satisfies the necessary conditions of optimality (within some $\epsilon>0$ tolerance).
\item The specific details of a particular inverse-optimal algorithm that is defined by the selection of an SLF and the control space are unknown until the results of these choices are further analyzed.  For example, it us unknown a priori that using a quadratic SLF and a particular Riemannian metric generates an SQP method.
\end{enumerate}
The notion that optimization algorithms can be derived from a small set of universal equations sounds preposterous.  Nonetheless, once the idea of a hidden algorithm primitive is postulated, the result is less surprising after the (control) dynamical equations are derived and optimal control theory is invoked.  What is indeed surprising is that it is eventually not necessary to ever generate a differential equation to produce an algorithm. At the same time, the continuous-time concept of a hidden algorithm primitive is crucial to an understanding of the natural physics of optimization.

Because of the close connection between the Hamilton-Jacobi and Schr\"{o}dinger equations~\cite{PNAS-2013,Field-2011}, it may be possible to transform \eqref{eq:HJ} to quantum mechanical equations using the wave ansatz and a Madelung-Bohm-type quantum potential.
If this transformation is possible, then the resulting Schr\"{o}dinger equation becomes the fundamental equation for optimal algorithm primitives.  Consequently, such a Schr\"{o}dinger-equation-based algorithm can be directly implemented on a quantum computer for solving Problem~$(N)$.  In other words, it may be more efficient to implement a Schr\"{o}dinger-equation-based algorithm on a quantum machine than one based on simulating Hamilton-Jacobi equations.   If such a transformation is possible, the main use of the resulting quantum solver will likely be future machine learning applications that are expected to solve massively large-scale optimization problems that require computational throughput beyond the limits of classical computing devices.

\vskip 6mm
%\noindent{\bf Acknowledgments}
%
%%The second author was supported by the  National Natural Science Foundation  under Grant No.
%%1244145e2.

\end{document}